\newcommand\ForAuthors[1]
 \newcommand\comment[1]{}
\def\rr{\mathbb R}
\def\cc{\mathbb C}
\def\cd{\cc^d}
\def\nn{\mathbb N}
\def\rd{\rr^d}
\def\co{c_0(\rr)}
\def\gse{\Delta^{\geqslant}}
\def\gs{\Delta^{>}}
\def\agmx{\operatorname{Argmax}}
\def\inte{\operatorname{int}}
\def\poseq{\mathrm{Pos}^{\geqslant}}
\def\pos{\mathrm{Pos}^{>}}
\def\bigkse{\mathbf{K}^\geqslant}
\def\bigks{\mathbf{K}^>}
\def\ks{k^{>}}
\def\kse{k^{\geqslant}}
\def\nuopt{\nu_{\rm opt}}
\def\kopt{k_{\rm opt}}
\def\xopt{x_{\rm opt}}
\def\xyopt{(x_{\rm opt},y_{\rm opt})}
\def\xin{X^\mathrm{in}}
\def\rea {\mathcal{R}}
\def\norm#1{\| #1\|}
\def\lmax#1{\lambda_{\rm max}(#1)}
\def\vdiag{V_{\rm diag}}
\def\kdiag{\mathbf{K}^{\rm diag}}
\def\mink{\mathbf{K}}
\def\mus#1{\mu\left(#1\right)}
\def\Idd{\operatorname{Id}}
\def\btilde{\tilde{b}}
\newenvironment{psmallmatrix}
  {\left(\begin{smallmatrix}}
  {\end{smallmatrix}\right)}
\newtheorem{proposition}{Proposition}
\newtheorem{assumption}{Assumption}
\newtheorem{example}{Example}
\newtheorem{lemma}{Lemma}
\newtheorem{corollary}{Corollary}
\newtheorem{theorem}{Theorem}
\newtheorem{remark}{Remark}
\begin{document}
\title{Quadratic Maximization over the Reachable Values Set of a Convergent Discrete-time Affine System :\\
The diagonalizable case}


\author{Assalé Adjé\thanks{This article has started when the author benefited from the support of the FMJH "Program Gaspard Monge for optimization and operations research and their interactions with data science", and from the support from EDF.}\\
Laboratoire de Mathématiques et Physique (LAMPS)\\
			  Université de Perpignan Via Domitia, Perpignan, France\\
              assale.adje@univ-perp.fr 
}


\date{}

\maketitle

\begin{abstract}
In this paper, we solve a maximization problem where the objective function is quadratic and convex or concave and the constraints set is the reachable value set of a convergent discrete-time affine system. Moreover, we assume that the matrix defining the system is diagonalizable. The difficulty of the problem lies in the infinite sequence to handle in the constraint set. Equivalently, the problem requires to solve an infinite number of quadratic programs. Therefore, the main idea is to extract a finite of them and to guarantee that the resolution of the extracted problems provides the optimal value and a maximizer for the initial problem. The number of quadratic programs to solve has to be the smallest possible. Actually, we construct a family of integers that over-approximate the exact number of quadratic programs to solve using basic ideas of linear algebra.  This family of integers is used in the final algorithm. A new computation of an integer of the family within the algorithm ensures a reduction of the number of loop iterations. The method proposed in the paper is illustrated on small academic examples. Finally, the algorithm is experimented on randomly generated instances of the problem.

\noindent {\bf Keywords}: Discrete-time Affine Systems; Quadratic Programming; Convex Programs; Concave Programs; Reachable Values Set
\end{abstract}

\section{Introduction}
\subsection{Motivation}
In many situations, we are interested in the maximum of some objective function over the reachable values set of an uncontrolled discrete-time dynamical system. An interesting reachable value can be the one which penalizes the most the system with respect to a performance criteria. In particular, this type of optimization problem arises in verification of systems or programs~\cite{DBLP:journals/entcs/Adje15}. Indeed, to verify a program or a system consists in proving that the specifications, the rules for which the program or the system is designed for, are satisfied. For some numerical specifications, e.g. the absence of overflows, the verification problem remains to compute bounds over the possible values taken by each coordinate of the state-variable. Thus, it boils down to solve an optimization problem whose the set of constraints is exactly  all possible reachable values. Another example of verification problems is an input-output system in charge of the control of a mechanical structure. We have to take care about the outputs of the system. Indeed, the mechanical structure has physical constraints. Therefore, we have to check whether the outputs are suitable for the mechanics. Again, the closed-loop structure of whole system indicates a discrete-time dynamical system and the verification analysis can be reduced to the resolution of a maximization problem (e.g.~\cite{Saberi2000,7001601}).  Besides, those computations have to be done before the execution or the use of the program or the system. Classical methods to verify programs or systems are testing methods or simulations. Unfortunately, they are not capable of covering, in general, all possible situations. In consequence, testing methods and simulations are completed by static methods. Static means that the only usable data are the dynamics of the system or the structure of the program : the way of generating possible values not the values themselves. 





\subsection{Context}

In this paper, we are interested in solving the maximization of a possibly non-homogeneous quadratic function over the reachable value of a discrete-time affine system. More precisely, let us consider a $d\times d$ matrix $A$, a $\rd$-vector $b$ and a polytope $\xin$. We define the following discrete-time affine system starting from $x_0\in\xin$, for all $k\in\nn$, by :
\begin{equation}
\label{stateeq}
  x_{k+1}=Ax_k+b \enspace.
\end{equation}
The recurrence formulation of Eq.~\eqref{stateeq} can be replaced, for all $k\geq 1$ by
$x_k=A^k x_0+ \sum_{i=0}^{k-1} A^i b$,
where $A^l$ denotes the $l$-th power of the matrix $A$. This rewriting allows to associate to the affine system, its reachable value set $\rea$ i.e. 
 \begin{equation}
\label{reach}
\rea=\xin\cup\bigcup_{k\in \nn^*} \left(A^{k}(\xin)+\sum_{i=0}^{k-1} A^i b\right) \enspace.
\end{equation}
Finally, given a $d\times d$ symmetric matrix $Q$ and a $\rd$ vector $q$, we are interested in solving the following quadratic maximization problem :
 \begin{equation}
 \label{optpb}
 \sup_{x\in\rea} x^\intercal Q x +q^\intercal x \enspace.
 \end{equation}
Classical quadratic programming solvers cannot be used as a direct solution in our context. First, the set $\rea$ is not necessary closed and bounded. Second, we are not able to represent the set of constraints i.e., here, a reachable values set of a discrete-time affine system. This is essentially due to the fact that the feasible points are infinite sequences. However, the optimization problem depicted at Eq.~\eqref{optpb} can be viewed as sequence of standard quadratic maximization problems. Indeed, introducing the sequence of polytope:
\begin{equation}
\label{reachseq}
\rea_k=\left\{\begin{array}{lr}
\displaystyle{A^{k}(\xin)+\sum_{i=0}^{k-1} A^i b} & \text{ if } k>0\\
\xin & \text{ if } k=0
\end{array}\right.
\end{equation}
We can rewrite Problem~\eqref{optpb} as follows:
\begin{equation}
\label{eqaux}
\sup_{k\in\nn} \max_{x\in \rea_k} x^\intercal Q x+q^\intercal x
\end{equation}
Optimization problem~\eqref{eqaux} represents an \emph{infinite} sequence of linearly constrained problems with quadratic objective function. The difficulty is to extract a finite number of quadratic problems in Eq.~\eqref{eqaux} or similarly sequences of finite length in $\rea$. The length has to be uniform with respect to the polytopic initial set $\xin$. Furthermore, if we consider a too coarse uniform length, then we will drastically increase the number of evaluations of the quadratic objective function or the number of quadratic optimization problems to solve. The objective is thus to find the smallest uniform length possible. More formally, the main difficulty is to find, if it exists, the smallest possible integer $K$, such that:
\begin{equation}
\label{motive}
\sup_{x\in\rea} x^\intercal Q x +q^\intercal x =\max_{k=0,\ldots, K}\max_{x\in\rea_k} x^\intercal Q x+q^\intercal x
\end{equation}
Since $x\in\rea$ is completed determined by a starting point $x_0\in\xin$. Then if $x\in\rea$ is an optimal solution for Problem~\eqref{optpb} not belonging to $\xin$, then there exists $k\in\nn$ and $x_0\in\xin$ such that $x=A^k x_0+  \sum_{i=0}^{k-1} A^i b$. 
In consequence, an \emph{optimal solution} for Problem~\eqref{optpb} is a couple $(\kopt,\xopt)\in\nn\times \xin$. The optimal value is the classical value of the supremum of the reals $x^\intercal Q x+q^\intercal x$ for $x\in\rea$. 

In this paper, we will suppose that the matrix $A$ is diagonalizable and has a spectral radius strictly smaller than one. Moreover, we will perform our computations for the cases where $Q$ is positive semi-definite or negative definite making the objective function either convex or strictly concave. Even with the stability condition on $A$, an optimal solution may not exist. However, the optimal value is always finite.  

\subsection{Related works}
The author of the paper initiated a work in over-approximating the value of problem using semi-definite programming without any guarantees on the exactness of the over-approximation. The approach have been developed when the discrete-time system was piecewise affine~\cite{10.1007/978-3-319-54292-8_2} or polynomial~\cite{adje2015property}. The technique developed here avoids, when $Q$ is positive semi-definite, the use of semi-definite programming~\cite{wolkowicz2012handbook}. 

A preliminary work using Lyapunov function has also been investigated~\cite{adj2018optimal}. This current paper has practical improvements with respect to the preliminary work. The algorithm developed here provides best results since the number of iterations is smaller in practice.

The closest work seems to be the one proposed by Ahmadi and G\"unl\"uk~\cite{7403149,ahmadi2018robust}. They are interested in solving an optimization problem of the following form:
\[
\min_{x_0}\{f(x_0)\mid x_k\in \Omega,k=0,1,2,\ldots,\ x_{k+1}=g(x_k)\}
\]
where $f$ is a linear functional and $g$ is a linear function (or belongs to a finite family of some linear functions). The formulation differs from ours. First,in their context, the state variable of the system has to stay in a polyhedral invariant $\Omega$ whereas, in our context, a constraint is only imposed to $x_0$ ($x_0\in\xin$). If we rewrite our problem into their framework we should write $\Omega=\rea$. In this case, $\Omega$ is polyhedral if $\rea=\rea_k$ (the one defined at Eq.~\eqref{reachseq}) for some $k\in\nn$.  Second, their problem deals with linear objective function. Moreover, the authors propose the computation of inner and outer approximations of the reachable values set based on semi-definite programming. This is not mandatory here. Finally, their approach can be used for switched linear systems~\cite{sun2006switched}. The main similarity is the computation of an upper bound on number of iterations (the number $K$ in Equation~\eqref{motive}). Those bounds are not comparable with the one proposed here since our frameworks are different. 

Some hypotheses made in the paper (the existence of a positive term) are connected to some decision problems for discrete-time dynamical systems~\cite{fijalkow2019decidability}. Those decision problems (Skolem problem and its variants) are still open~\cite{ouaknine2014positivity}. We do not provide any result about the decidability of the existence of positive terms. First, the studied sequence is not a linear recurrence. Second, the goal is this paper is to develop a constructive method to solve computationally an optimization problem for which a positive term of a sequence must exist. 

Quadratic optimization over the trajectories of linear systems are also classical to synthesize optimal controls~\cite{880615} or perform a robust analysis or even for inverse problem in control~\cite{ZHANG2019108593}. Two main differences between problems in control theory and this paper occur. First, here, we do not consider controlled systems. The dynamical system evolves autonomously and no controls are applied. A possible link is that the control law has been synthesized before our analysis and we perform an analysis for the closed-loop system gathering the control and the state variable in one new vector. In second time, again, since the goal is to optimize along all possible orbits, this is not allowed to break the system to an arbitrary finite horizon.   

\subsection{Contributions and Outline of the paper}
In this paper, the main contribution is the resolution of the optimization problem depicted at Equation~\eqref{optpb} when the matrix $Q$ is either positive semi-definite or negative definite and the matrix $A$ is diagonalizable and has a spectral radius strictly smaller than one. The resolution of the problem means that we provide a reachable value and the optimal value of the optimization problem. A reachable value is actually characterized by a vector in $\xin$ and an integer $k\in\nn$. The key idea is the construction of a family of integers that over-approximate the smallest maximizer rank. This family is parameterized by the ranks for which the term of the sequence $\left(\max_{x\in\rea_k} x^\intercal Q x+q^\intercal x\right)_k$ is positive. The integers of the family represent a certain number of iterations to make to be sure to obtain a maximizer and the optimal value of Problem~\eqref{optpb}. A new integer of the family is computed within our algorithm, when possible, to reduce the number of iterations. The detailed method is presented at Algorithm~\ref{algoAffQP}.

Section~\ref{sequences} is devoted to an abstract study of the supremum of real sequences whose limit is zero. The goal of the study is to determine conditions (existence of a non-negative term) for the existence of a maximizer (a rank for which the associated term is the supremum of the sequence) (Proposition~\ref{posetdelta}). The study also identifies the smallest maximizer rank (Proposition~\ref{argmax}). 

Section~\ref{mainresults} applies the results of the study of Section~\ref{sequences}. For the problem presented at Equation~\eqref{optpb}, the stability of the system and the compactness of $\xin$ proves that the sequence of problem converges to zero. To solve Problem~\eqref{optpb}, we then compute an over-approximation of the smallest maximizer rank (Th~\ref{mainth}). This safe computable over-approximation of the smallest maximizer of the problem depends on a given positive term of the sequence. This over-approximation is constructed with respect to a spectral decomposition of the matrix defining the system. Section~\ref{mainresults} explains some computational aspects relative to the computation of auxiliary optimization problems and how to find the first positive term of the sequence. Section~\ref{mainresults} also presents Algorithm~\ref{algoAffQP} that permits to solve Problem~\eqref{optpb}. 

Section~\ref{experiments} is devoted to numerical examples, implementation and experiments. The examples are purely academic and illustrates in detail the potential of the techniques. Experiments are executions of Algorithm~\ref{algoAffQP} on randomly generated systems.

Section~\ref{conclusion} concludes and discusses some future direction of research.


\section{On the supremum of zero limit real sequences}
\label{sequences}
Let us denote by $\co$ the set of real sequences the limit of which is equal to 0 i.e. $\co=\{s=(u_0,u_1,\ldots)\in\rr^\nn\mid \lim_{n\to +\infty} u_n=0\}$. For an element of $\co$, we are 
interested in computing the supremum of its terms. 

For all $(n,m)\in  \nn \times (\nn\cup\{\infty\})$ such that $n<m$, we introduce the function from $\co$ to $\rr$ defined as follows:
\[
u\mapsto S^{n,m}_u=
\left\{
\begin{array}{cr}
\displaystyle{\sup_{k\in\nn} u_k} & \text{if } n=0 \text{ and } m=\infty\\
\displaystyle{\sup_{k\geq n} u_k} & \text{if } n>0 \text{ and } m=\infty\\
\displaystyle{\sup_{n\leq k\leq m} u_k} & \text{if } n>0 \text{ and } m<\infty
\end{array}
\right.
\]
Then, with our notations, for $u\in\co$, we are interested in computing $S_u^{0,\infty}$. For $u\in\co$, we are also looking for the set of maximizers i.e. the set of ranks which attain the supremum of the terms of the sequence. We will denote the set of maximizers by $\agmx(u)$. More formally, $\agmx(u)=\{k\in\nn\mid u_k=\displaystyle{S_u^{0,\infty}}\}$.
For computations purpose, we need to characterize $\agmx(u)$. Consequently, we introduce, for $u\in\co$, the two sets of ranks:
\[
\gse_u=\{k\in\nn\mid S^{0,k}_u\geq S^{k+1,\infty}_u\}\qquad \text{ and }\qquad \gs_u=\{k\in\nn\mid S^{0,k}_u> S^{k+1,\infty}_u\}
\]
It is easy to see that $\gse_u\subseteq \gs_u$. Moreover, if $k$ belongs to $\gse_u$ (resp. $\gs_u$), then any integer greater than $k$ belongs to $\gse_u$ (resp. $\gs_u$). Besides, we will need the set of ranks for which the associated term is non-negative (resp. positive):
\[
\poseq_u=\{k\in\nn\mid u_k\geq 0\}\qquad \text{ and }\qquad \pos_u=\{k\in\nn\mid u_k>0\}
\]
It is obvious that $\pos_u\subseteq \poseq_u$. We insist on the fact that even if $u\in\co$, $\poseq_u$ and hence $\pos_u$ can be empty. To construct the characterizations, we need to consider the smallest elements of $\gse_u$, $\gs_u$, $\poseq_u$ and $\pos_u$.    
\[
\bigkse_u=\inf \gse_u ;\  
\qquad 
\bigks_u=\inf\gs_u ;\
\qquad
\kse_u=\inf\poseq_u
\quad \text{ and }\ 
\ks_u=\inf\pos_u
\]
Note that by convention, the smallest element of the empty set is equal to $+\infty$. Hence, 
$\bigkse_u<+\infty$ if and only if $\gse_u\neq\emptyset$; $\bigks_u<+\infty$ if and only if $\gs_u\neq\emptyset$; $\kse_u<+\infty$ if and only if $\poseq_u\neq\emptyset$ and $\ks_u<+\infty$ if and only if $\pos_u\neq\emptyset$. Figure~\ref{fig:defks} illustrates the definition of the integers for different sequences. 

\begin{figure}[h]
\begin{minipage}[b]{0.45\textwidth}
    \begin{tikzpicture}[scale=0.9]
    \newcommand{\xint}{0,1,...,40}
    \begin{axis}[
    axis lines=left,
    axis x line =middle,
    ymax=2,
    minor tick num=5,
    x tick label style={
    above
    },
    x label style={at={(1.04,0.59)},anchor=south},
    y label style={at={(0.17,1.1)},rotate=-90,anchor=north},
    xlabel = $k$,
    ylabel = $x_k$
    ]
\foreach \x in \xint {\addplot[only marks,mark=*,mark size= 0.3mm] coordinates {(\x,{-4*abs(sin(deg((0.4*\x+0.5)*pi)))/(0.04*\x+1)})};}
    \end{axis}
        \draw (3.5,0) node {$x_k=\dfrac{-4|\sin((0.4k+0.5)\pi)|}{0.04k+1}$};
        \draw (3.5,5) node {$\kse_x=\ks_x=\bigkse_x=\bigks_x=+\infty$};
    \end{tikzpicture}
\end{minipage}
\hspace{0.5cm}
\begin{minipage}[b]{0.45\textwidth}
\begin{tikzpicture}[scale=0.9]
 \newcommand{\xint}{0,1,...,40}
    \begin{axis}[
    axis lines=left,
    axis x line =middle,
    ymax=2,
    ymin=-4,
    minor tick num=5,
    x tick label style={
    above
    },
    x label style={at={(1.04,0.59)},anchor=south},
    y label style={at={(0.17,1.1)},rotate=-90,anchor=north},
    xlabel = $k$,
    ylabel = $y_k$
    ]
\foreach \x in \xint {\addplot[only marks,mark=*,mark size= 0.3mm] coordinates
 {(\x,{-3*abs(sin(deg(0.4*(\x+1)*pi)))/(0.1*\x+1)})};}
\end{axis}
\draw (3.5,0) node {$y_k=\dfrac{-3|\sin((0.4(k+1)\pi)|}{0.1k+1}$};
\draw (3.5,5) node {$\kse_y=\bigkse_y=4;\ \ks_y=\bigks_y=+\infty$};
\end{tikzpicture}
\end{minipage}

\begin{minipage}[b]{0.45\textwidth}
\begin{tikzpicture}[scale=0.9]
 \newcommand{\xint}{0,1,...,40}
    \begin{axis}[
    axis lines=left,
    axis x line =middle,
    ymax=3,
    ymin=-4,
    minor tick num=5,
    x tick label style={
    above
    },
    x label style={at={(1,0.48)},anchor=south},
    y label style={at={(0.17,1.1)},rotate=-90,anchor=north},
    xlabel = $k$,
    ylabel = $z_k$
    ]
\foreach \x in \xint {\addplot[only marks,mark=*,mark size= 0.3mm] coordinates {(\x,{(1.6*(\x-1)/(0.08*\x*\x+0.5)})};}
\end{axis}
\draw (3.5,1) node {$z_k=\dfrac{1.6k-1.6}{0.08 k^2+0.5}$};
\draw (3.5,2.2) node {$\kse_z=1;\ \ks_z=2;\ \bigkse_z=\bigks_z=4$};
\end{tikzpicture}
\end{minipage}
\hspace{0.5cm}
\begin{minipage}[b]{0.45\textwidth}
\begin{tikzpicture}[scale=0.9]
 \newcommand{\xint}{0,1,...,40}
    \begin{axis}[
    axis lines=left,
    axis x line =middle,
    ymax=3,
    ymin=-4,
    minor tick num=5,
    x tick label style={
    above
    },
    x label style={at={(1,0.48)},anchor=south},
    y label style={at={(0.17,1.1)},rotate=-90,anchor=north},
    xlabel = $k$,
    ylabel = $t_k$
    ]
\foreach \x in \xint {\addplot[only marks,mark=*,mark size= 0.3mm] coordinates {(\x,{floor(((1.2*\x-2)/(0.04*\x*\x+0.5)))})};}
\end{axis}
\draw (3.5,1) node {$t_k=\left\lfloor \dfrac{1.2k-2}{0.04 k^2+0.5}\right\rfloor$};
\draw (3.5,2.2) node {$\kse_t=2;\ \ks_t=3;\ \bigkse_t=4;\ \bigks_t=11$};
\end{tikzpicture}
\end{minipage}
    \caption{Illustrations of the definition of the integers $\kse_u$, $\ks_u$, $\bigkse_u$ and $\bigks_u$. }
    \label{fig:defks}
\end{figure}

\begin{proposition}
\label{posetdelta}
Let $u\in\co$. The following assertions hold:
\begin{enumerate}
    \item For all $k\in\nn$, $S^{k,\infty}_u=\sup_{l\geq k} u_l\geq 0$;
    \item $\poseq_u\neq \emptyset\iff \gse_u\neq\emptyset$;
    \item $\pos_u\neq \emptyset\iff \gs_u\neq\emptyset$;
    \item $\gs_u=\emptyset\iff S_u^{0,\infty}=0$.
        \end{enumerate}
\end{proposition}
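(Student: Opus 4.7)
The plan is to prove the four items in order, with item (1) serving as the workhorse for everything else. The idea for item (1) is simply that shifting the sequence does not change its limit: the tail $(u_n)_{n\geq k}$ still converges to $0$, so for any $\varepsilon>0$ there exists $n\geq k$ with $u_n>-\varepsilon$, giving $S^{k,\infty}_u>-\varepsilon$; letting $\varepsilon\to 0^+$ yields $S^{k,\infty}_u\geq 0$.

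For items (2) and (3), I would handle the two directions separately. For the $(\Leftarrow)$ direction, pick $k$ in $\gse_u$ (resp.\ $\gs_u$): then $S^{0,k}_u\geq S^{k+1,\infty}_u\geq 0$ (resp.\ strictly positive) by item (1). Because $S^{0,k}_u$ is a supremum over a finite set, it is attained at some $l\leq k$ with $u_l\geq 0$ (resp.\ $u_l>0$), so $l\in\poseq_u$ (resp.\ $\pos_u$). For the $(\Rightarrow)$ direction, I would exploit monotonicity: $n\mapsto S^{0,n}_u$ is non-decreasing, while $n\mapsto S^{n+1,\infty}_u$ is non-increasing and, being bounded below by $0$ via item (1), converges to some limit which must be $0$ since $u_n\to 0$. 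Given $k$ with $u_k>0$ in item (3), I take $n\geq k$ large enough that $S^{n+1,\infty}_u<u_k\leq S^{0,n}_u$, placing $n$ in $\gs_u$. For item (2), the same argument closes the case $u_k>0$; in the remaining case $u_k=0$ with no term strictly positive, one gets $S^{0,n}_u=0$ for $n\geq k$ and $S^{n+1,\infty}_u=0$ (using item (1) combined with $u_l\leq 0$ for all $l$), hence $n\in\gse_u$.

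Finally, item (4) drops out as a consequence: by item (3), $\gs_u=\emptyset$ is equivalent to $\pos_u=\emptyset$, i.e.\ to $u_k\leq 0$ for all $k$, which forces $S^{0,\infty}_u\leq 0$; combined with $S^{0,\infty}_u\geq 0$ from item (1) applied at $k=0$, this gives $S^{0,\infty}_u=0$. The converse is symmetric: $S^{0,\infty}_u=0$ implies every $u_k\leq 0$, so $\pos_u=\emptyset$ and item (3) yields $\gs_u=\emptyset$. The delicate point I expect to need care with is the edge case in item (2) where the only non-negative term is exactly $0$; there one really needs item (1) to pin the tail supremum to be exactly $0$ rather than merely non-positive, and this exact cancellation is what makes $n\in\gse_u$ hold with equality.
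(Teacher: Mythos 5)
Your proof is correct and follows essentially the same route as the paper: item (1) from convergence to $0$, the forward directions of (2)–(3) by finding a tail dominated by a given non-negative/positive term, the reverse directions via item (1) and attainment of a finite supremum, and (4) as a corollary of (1) and (3). The only differences are cosmetic (you phrase the tail estimate via $\lim_n S^{n+1,\infty}_u=0$ where the paper picks an explicit $N$ with $u_k\leq u_{\ks_u}/2$, and you argue the reverse direction of (2) directly rather than by contradiction), so no further comment is needed.
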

\begin{proof}
\begin{enumerate}[label = \emph{\arabic*.}]
\item Let $k\in\nn$. We are faced to two cases either there exists $l\geq k$ such that $u_l\geq 0$ or for all $l\geq k$, $u_l<0$. For the first case, we have $\sup_{l\geq k} u_l\geq 0$. For the second case, as $\lim_{k\to +\infty} u_k=0$, we have for all $\varepsilon>0$, there exists $l\geq k$, $-\varepsilon \leq u_l<0$. This means exactly that $\sup_{l\geq k} u_j=0$. Finally, in the two cases, $\sup_{l\geq k} u_l\geq 0$.

\item $\Rightarrow$. Let suppose that $\poseq_u\neq \emptyset$. This is the same as $\kse_u<+\infty$ and $\kse_u\in\poseq_u$. This can lead to two situations : either $\ks_u=+\infty$ or $\ks_u<+\infty$.
Suppose $\ks_u=+\infty$, we thus have $u_{\kse_u}=0$ and $u_j\leq 0$ for all $j\neq \kse_u$. Then $\sup_{0\leq j\leq \kse_u} u_j=u_{\kse_u}=0\geq \sup_{j>\kse_u} u_j$ and $\kse_u\in\gse_u$.
Now suppose that $\ks_u<+\infty$. Since $u_{\ks_u}>0$ and $\lim_{k\to +\infty}u_k=0$, there exists $N\in\nn$ such that $k\geq N$ implies that $u_k\leq u_{\ks_u}/2$. This implies that ${\ks_u}<N$. We thus have $\sup_{0\leq k\leq N} u_k\geq u_{\ks_u}>u_{\ks_u}/2\geq \sup_{k>N} u_k$ and $N\in\gs_u$. This proof also validates : $\pos_u\neq \emptyset\implies \gs_u\neq \emptyset$.

$\Leftarrow$.Now suppose that $\gse_u\neq \emptyset$ and let $K\in\gse_u$. Suppose that for all $k\in\nn$, $u_k<0$. Let $s=\sup_{0\leq j\leq K}u_j<0$. As $\lim_{k\to +\infty} u_k=0$, there exists $N\in\nn$ such that for all $k\geq N$, $u_k\geq s/2> s$. Then $\sup_{j>K} u_j\geq s/2>s=\sup_{0\leq l\leq K} s_l$ which contradicts the definition of $K$. 

\item As we have proved : $\pos_u\neq \emptyset\implies \gs_u\neq \emptyset$, we must prove the converse implication. Let us suppose that $\gs_u\neq \emptyset$ and take $K\in\gs_u$. From the first assertion of the proposition, we have $\sup_{0\leq l\leq K}u_l> \sup_{j> K} u_j\geq 0$. Finally, $\sup_{0\leq l\leq K}u_l >0$ which means that $\pos_u\neq \emptyset$.

\item Suppose that $\gs_u=\emptyset$. Then from the third statement, for all $k\in\nn$, we have $u_k\leq 0$. Then $S_u^{0,\infty}\leq 0$ and from the first statement $S_u^{0,\infty}\geq 0$. Finally,
$S_u^{0,\infty}=0$. 

Now, Suppose that $S_u^{0,\infty}=0$ then for all $k\in\nn$, $u_k\leq 0$ and then $\pos_u=\emptyset$ and from the third statement $\gs_u=\emptyset$.
\end{enumerate}

\end{proof}
\begin{proposition}
\label{ineg}
The following inequalities hold:
\[
\kse_u\leq \ks_u;\qquad \kse_u\leq \bigkse_u;\qquad \ks_u\leq \bigks_u\quad \text{ and }\bigkse_u\leq \bigks_u\enspace .
\]
Moreover, if $\ks_u<+\infty$, then $\ks_u\leq \bigkse_u$.
\end{proposition}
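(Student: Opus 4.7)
The plan is to treat the five inequalities as five small, essentially independent observations, relying on two facts that are already available: the inclusions $\pos_u\subseteq\poseq_u$ and $\gs_u\subseteq\gse_u$ (the latter is stated just before the proposition), and the fact, established in Proposition~\ref{posetdelta}(1), that $S^{k+1,\infty}_u\geq 0$ for every $k\in\nn$.

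The two inclusion-based inequalities come for free: from $\pos_u\subseteq \poseq_u$ one obtains $\inf \pos_u\geq \inf \poseq_u$, i.e.\ $\kse_u\leq \ks_u$; and from $\gs_u\subseteq \gse_u$ one gets $\bigkse_u\leq \bigks_u$ the same way. For $\kse_u\leq \bigkse_u$ and $\ks_u\leq \bigks_u$, I would pick any $k\in\gse_u$ (resp.\ $\gs_u$) and argue that the defining inequality $S^{0,k}_u\geq S^{k+1,\infty}_u$ (resp.\ strict) combined with $S^{k+1,\infty}_u\geq 0$ forces $S^{0,k}_u\geq 0$ (resp.\ $>0$). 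Since $S^{0,k}_u$ is a supremum over the finite set $\{0,\dots,k\}$, it is attained at some index $j^\star\leq k$, and that index witnesses membership of $\poseq_u$ (resp.\ $\pos_u$). Thus $\kse_u\leq j^\star\leq k$, and taking infimum over $k$ gives $\kse_u\leq \bigkse_u$; the strict version gives $\ks_u\leq \bigks_u$.

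The genuinely new content is the ``moreover'' part, $\ks_u\leq \bigkse_u$ under the assumption $\ks_u<+\infty$. The plan is to prove this by contrapositive on ranks: I would show that any $k<\ks_u$ fails to belong to $\gse_u$, hence $\bigkse_u=\inf \gse_u\geq \ks_u$. For such a $k$, the definition of $\ks_u$ as $\inf \pos_u$ gives $u_j\leq 0$ for every $j\in\{0,\dots,k\}$, so $S^{0,k}_u\leq 0$. On the other hand, $\ks_u\geq k+1$ and $u_{\ks_u}>0$ imply $S^{k+1,\infty}_u\geq u_{\ks_u}>0$. Hence $S^{0,k}_u\leq 0<S^{k+1,\infty}_u$, so $k\notin\gse_u$, which is exactly what is needed.

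No step is really hard; the only subtle point is to remember that membership in $\gse_u$ compares $S^{0,k}_u$ with $S^{k+1,\infty}_u$ rather than with the global supremum, and to invoke Proposition~\ref{posetdelta}(1) at the right place so that $S^{k+1,\infty}_u\geq 0$ can be used as a nontrivial lower bound. Everything else is bookkeeping on infima and finite versus infinite suprema.
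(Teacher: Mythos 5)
Your proposal is correct and follows essentially the same route as the paper: the first and fourth inequalities come from the inclusions $\pos_u\subseteq\poseq_u$ and $\gs_u\subseteq\gse_u$, and the remaining ones from comparing $S^{0,k}_u$ with the non-negative tail supremum $S^{k+1,\infty}_u$ to show that indices below $\kse_u$ (resp.\ $\ks_u$) cannot lie in $\gse_u$ (resp.\ $\gs_u$). The only cosmetic difference is that you argue directly (bounding every element of $\gse_u$ from below, using attainment of the finite supremum) where the paper argues by contradiction on the minimality of $\bigkse_u$; the content is the same.
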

\begin{proof}
Since $\pos_u\subseteq \poseq_u$ then $\kse_u\leq \ks_u$. 
If $\bigkse_u=+\infty$, $\kse_u\leq \bigkse_u$ holds. Now, we suppose that $\bigkse_u<+\infty$. From the second statement of Prop.\ref{posetdelta}, $\kse_u<+\infty$. Suppose that $\bigkse_u<\kse_u$. Therefore, for all $k\leq\bigkse_u$, $u_k<0$ and we have $S_u^{0,\bigkse_u}<0\leq u_{\ks_u}\leq S_u^{\bigkse_u+1,\infty}$ and thus $\bigkse_u\notin \gse_u$ which contradicts its minimality and $\kse_u\leq \bigkse_u$.
The same proof can be adapted to prove $\ks_u\leq \bigks_u$. Finally, since $\gs_u\subseteq \gse_u$, we have $\bigkse_u\leq \bigks_u$. Using the same proof as for $\kse_u\leq \bigkse_u$, we can prove $\ks_u\leq \bigkse_u$ when $\ks_u<+\infty$.

\end{proof}
\begin{proposition}[Argmax]
\label{argmax}
Let $u\in\co$. The following assertions hold:
\begin{enumerate}
    \item $\agmx(u)\subseteq \gse_u$;
    \item If $\bigkse_u<+\infty$, $\bigkse_u=\min \agmx(u)$;
    \item If $\bigks_u<+\infty$, $S_u^{0,\infty}>0$ and $\bigks_u=\max \agmx(u)$;
    \item $\agmx(u)\neq\emptyset\iff \gse_u\neq\emptyset$;
    \item $\agmx(u)=\emptyset \implies S^{0,\infty}_u=\lim_{k\to +\infty} u_k=0$; 
\end{enumerate}
\end{proposition}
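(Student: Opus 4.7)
The plan is to prove the five assertions in the order they are stated, since the later ones follow quickly from the earlier ones. The strategy is first to extract structural information from the definitions of $\gse_u$, $\gs_u$ and $\agmx(u)$, then to leverage item~1 of Proposition~\ref{posetdelta} (namely $S_u^{k,\infty}\geq 0$ for every $k$) to turn finite minimizers of $\gse_u$ or $\gs_u$ into actual maximizers of the sequence.

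For item~1, I would take $k\in\agmx(u)$, so that $u_k=S_u^{0,\infty}$. Then $u_k\geq S_u^{k+1,\infty}$ because $u_k$ dominates every term, and $S_u^{0,k}\geq u_k$ trivially, giving $k\in\gse_u$. For item~2, assume $\bigkse_u<+\infty$. The definition of $\gse_u$ yields $S_u^{0,\bigkse_u}\geq S_u^{\bigkse_u+1,\infty}$, so $S_u^{0,\infty}=S_u^{0,\bigkse_u}$, a sup over a finite set, hence attained at some $k^\star\leq\bigkse_u$. I then claim $k^\star=\bigkse_u$: otherwise $u_{k^\star}=S_u^{0,\bigkse_u}$ and $S_u^{k^\star+1,\infty}\leq\max\{S_u^{k^\star+1,\bigkse_u},S_u^{\bigkse_u+1,\infty}\}\leq u_{k^\star}=S_u^{0,k^\star}$, which would place $k^\star$ in $\gse_u$ and contradict the minimality of $\bigkse_u$. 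Combined with item~1 this shows $\bigkse_u=\min\agmx(u)$.

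Item~3 is where I expect the real work. Assume $\bigks_u<+\infty$. Using Proposition~\ref{posetdelta}.1, $S_u^{\bigks_u+1,\infty}\geq 0$, and the strict inequality defining $\gs_u$ gives $S_u^{0,\infty}=S_u^{0,\bigks_u}>S_u^{\bigks_u+1,\infty}\geq 0$. Any $k>\bigks_u$ then satisfies $u_k\leq S_u^{\bigks_u+1,\infty}<S_u^{0,\infty}$, so $\agmx(u)\subseteq\{0,\ldots,\bigks_u\}$. The delicate part is to prove $\bigks_u\in\agmx(u)$: let $k^\star$ be the largest index in $\{0,\ldots,\bigks_u\}$ achieving $S_u^{0,\bigks_u}$, so that $u_j<u_{k^\star}$ for $k^\star<j\leq\bigks_u$. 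Then $S_u^{k^\star+1,\bigks_u}<u_{k^\star}$, and combined with $S_u^{\bigks_u+1,\infty}<u_{k^\star}$ we obtain $S_u^{k^\star+1,\infty}<u_{k^\star}=S_u^{0,k^\star}$, so $k^\star\in\gs_u$. Minimality of $\bigks_u$ forces $k^\star=\bigks_u$, hence $u_{\bigks_u}=S_u^{0,\infty}$. The main obstacle is precisely this uniqueness-style argument: one must rule out ``ties'' among maximizers occurring strictly before $\bigks_u$, which is why I pick the \emph{largest} maximizer in the interval rather than the smallest.

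Items~4 and~5 are then immediate. For~4, the implication $\gse_u\neq\emptyset\Rightarrow\agmx(u)\neq\emptyset$ follows from item~2, and the converse follows from item~1. For~5, if $\agmx(u)=\emptyset$ then $\gse_u=\emptyset$ by item~4, hence $\gs_u=\emptyset$ since $\gs_u\subseteq\gse_u$, and Proposition~\ref{posetdelta}.4 yields $S_u^{0,\infty}=0$; the equality with $\lim_{k\to+\infty}u_k$ comes from $u\in\co$.
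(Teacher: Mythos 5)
Your proof is correct and follows essentially the same route as the paper's: item~1 is identical, item~2 uses the same decomposition of the supremum combined with minimality of $\bigkse_u$, and item~3 hinges on the same key device the paper uses (taking the \emph{largest} maximizer $\overline l$ in $\{0,\ldots,\bigks_u\}$, showing $\overline l\in\gs_u$, and invoking minimality of $\bigks_u$), with items~4 and~5 deduced exactly as in the paper. The only cosmetic difference is that you phrase item~3 directly rather than by contradiction.
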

\begin{proof}
\begin{enumerate}[label = \emph{\arabic*.}]
\item Let $k\in\agmx(u)$. We have $u_k=S^{0,\infty}_u$ then $S_u^{0,k}=u_k\geq S_{u}^{k+1,\infty}$ and $k\in\gse_u$.

\item Assume that $\bigkse_u<+\infty$. From the first statement, $k\in\agmx(u)$ implies that $\bigkse_u\leq k$. It suffices to prove that $\bigkse_u\in\agmx(u)$. However, we have : $\sup_{l\in\nn} u_l=\max\{\sup_{0\leq j\leq \bigkse_u} u_j,\sup_{m>\bigkse_u}u_m\}=\sup_{0\leq j\leq \bigkse_u} u_j$. Therefore, there exists $j\in\agmx(u)$ such that $j\leq \bigkse_u$. This integer $j$ must also satisfy $\bigkse_u\leq j$. We conclude that $j=\bigkse_u$ and $\bigkse_u\in\agmx(u)$.

\item Suppose that $\bigks_u<+\infty$. Suppose that there exists $k\in\agmx(u)$ such that $\bigks_u<k$.
From the definition of $\bigks_u$, we have $S_u^{0,\infty}\geq S_u^{0,\bigks_u}>S_u^{\bigks_u+1,\infty}\geq u_k=S_u^{0,\infty}$ which is not possible. Then $k\leq \bigks_u$. Moreover, $S_u^{0,\infty}\geq S_{u}^{0,\bigks_u}>S_u^{\bigks_u+1,\infty}\geq 0$ from Prop.~\ref{posetdelta}. Now it suffices to prove that $u_{\bigks_u}=\sup_{0\leq l\leq \bigks_u} u_l$. Suppose that $u_{\bigks_u}$ does not attain the maximum and let $\overline l=\max\{0\leq l\leq \bigks_u\mid u_l=S_u^{0,\bigks_u}\}$. From the definition of $\bigks_u$, $\sup_{l>\bigks_u} u_l<\sup_{0\leq l\leq \bigks_u} u_l=u_{\overline l}$. Now, $u_{\overline l}>\sup_{\overline l<l\leq \bigks_u} u_l$ by definition of $\overline l$. 
Finally, $u_{\overline l}=\sup_{0\leq l\leq \overline l} u_l>\sup_{l>\overline l} u_l$ and $\overline l<\bigks_u$. This contradicts the minimality of $\bigks_u$ and $u_{\bigks_u}=\sup_{0\leq l\leq \bigks_u} u_l$.

\item The implication $\agmx(u)\neq \emptyset \implies \gse_u\neq \emptyset$ follows readily from the first statement. If $\gse_u\neq \emptyset$ then $\bigkse_u<+\infty$ and $\bigkse_u\in\agmx(u)\neq \emptyset$.

\item Assume that $\agmx(u)=\emptyset$. This is equivalent to $\gse_u=\emptyset$ which implies that $\gs_u=\emptyset$ and we conclude from the fourth statement of Prop.~\ref{posetdelta}.
\end{enumerate}

\end{proof}

Proposition~\ref{argmax} confirms the illustrations depicted at Figure~\ref{fig:defks}. For the sequence $(z_k)_k$, of Figure~\ref{fig:defks}, $\bigkse_u$ and $\bigks_u$ coincide. In this case, the maximizer is unique. We observe,  still for the sequence $(z_k)_k$, of Figure~\ref{fig:defks}, that the maximizer seems to satisfy a first-order condition. For the sequence $(t_k)_k$, of Figure~\ref{fig:defks}, $\bigkse_u$ is strictly smaller than $\bigks_u$. Between, those two integers, the sequence is constant and, in this interval, the terms are equal to the maximum value reached by the sequence.

\begin{example}[Illustration of the fifth statement of Prop.~\ref{argmax}]
\label{counterex}
Let us consider the optimization problem in dimension one, with data:
\[
\xin=[1/4;1/2];\ x_{k+1}=(1/2)x_k;\ Q=1\text{ and } q=-1
\]
The optimization problem to solve is thus :
\[
\sup_{k\in\nn} \sup_{x\in [1/4;1/2]} ((1/2)^k x)^2 - (1/2)^k x
\]
The functions $f_k:x\mapsto ((1/2)^k x)^2-(1/2)^k x$ are strictly decreasing on $[1/4;1/2]$ then $u_k:=\sup_{x\in [1/4;1/2]} f_k(x)=(1/16)\times (1/2)^{2k}-(1/4)\times (1/2)^k$.

We have for all $k\in\nn$, $u_k<0$ and thus $\poseq_u=\emptyset$. The sequence $(u_k)_k$ is strictly increasing and thus $\gse_u=\emptyset$. The sequence $(u_k)_k$ tends to 0. In this example, we have $\sup_{k\in\nn}u_k=\lim_{k\to +\infty} u_k=0$. The supremum cannot be computed in finite time.

\end{example}

\begin{proposition}
\label{supremumpos}
Let $u\in\co$. The following assertions hold:
\begin{enumerate}
    \item Assume that $\poseq_u\neq \emptyset$. Let $k\geq \kse_u$ such that $S_u^{\kse_u,k}\geq S_u^{k+1,\infty}$ then $k\in\gse_u$;
    \item Assume that $\pos_u\neq \emptyset$. Let $k\geq \ks_u$ such that $S_u^{\ks_u,k}\geq S_u^{k+1,\infty}$ then $k\in\gse_u$;
    \item Assume that $\pos_u\neq \emptyset$. For all $\bigkse_u\leq k$, $S_u^{0,k}=S_u^{\ks_u,k}=S_u^{\kse_u,k}=S_u^{0,\infty}$;
\end{enumerate}
\end{proposition}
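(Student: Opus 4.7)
The three claims all hinge on the same observation: once one passes the first index where $u$ takes a nonnegative (respectively, positive) value, the prefix of strictly negative terms can no longer contribute to the supremum. The plan is to exploit this to reduce every sup on a window $[\ell,k]$ to a sup over $[0,k]$ or vice versa, with $\ell\in\{\kse_u,\ks_u\}$.

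For statement 1, I would split on whether $\kse_u=0$. If $\kse_u=0$ the hypothesis is exactly $k\in\gse_u$. If $\kse_u>0$, then by minimality of $\kse_u$ every $u_j$ with $j<\kse_u$ is strictly negative, hence $S_u^{0,\kse_u-1}<0$. On the other hand $S_u^{\kse_u,k}\geq u_{\kse_u}\geq 0$, so $S_u^{0,k}=\max\{S_u^{0,\kse_u-1},\,S_u^{\kse_u,k}\}=S_u^{\kse_u,k}\geq S_u^{k+1,\infty}$, giving $k\in\gse_u$.

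For statement 2, I would first invoke Proposition~\ref{ineg} to get $\kse_u\leq \ks_u$, so that $[\ks_u,k]\subseteq[\kse_u,k]$ and therefore $S_u^{\kse_u,k}\geq S_u^{\ks_u,k}\geq S_u^{k+1,\infty}$. Statement 1 then applies to yield $k\in\gse_u$. (A fully self-contained argument would repeat the splitting of statement 1 with $\ks_u$ in place of $\kse_u$, using that $u_{\ks_u}>0$ strictly dominates every earlier term.)

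For statement 3, the key is to upgrade the nonemptiness of $\pos_u$ into the finiteness of $\bigkse_u$ and $\ks_u$ (Propositions~\ref{posetdelta} and~\ref{ineg}) and to use Proposition~\ref{argmax}.2, which gives $u_{\bigkse_u}=S_u^{0,\infty}$. For any $k\geq \bigkse_u$, Proposition~\ref{ineg} supplies $\kse_u\leq \ks_u\leq \bigkse_u\leq k$, so $\bigkse_u$ lies in each of the intervals $[0,k]$, $[\ks_u,k]$, and $[\kse_u,k]$. Consequently each of $S_u^{0,k}$, $S_u^{\ks_u,k}$, $S_u^{\kse_u,k}$ is at least $u_{\bigkse_u}=S_u^{0,\infty}$ and trivially at most $S_u^{0,\infty}$, so all four quantities coincide. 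I do not expect a serious obstacle here; the only point of care is making sure the chain $\ks_u\leq \bigkse_u$ is invoked in its conditional form (it holds because $\ks_u<+\infty$ under the hypothesis $\pos_u\neq\emptyset$).
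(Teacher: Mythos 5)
Your proposal is correct and follows essentially the same route as the paper: all terms with index below $\kse_u$ (resp.\ $\ks_u$) are strictly negative, so the sup over $[0,k]$ collapses to the sup over $[\kse_u,k]$, and for the third statement one combines $\kse_u\leq\ks_u\leq\bigkse_u$ from Proposition~\ref{ineg} with $u_{\bigkse_u}=S_u^{0,\infty}$ from Proposition~\ref{argmax}. Your reduction of statement 2 to statement 1 via $S_u^{\kse_u,k}\geq S_u^{\ks_u,k}$ is a harmless variant of the paper's ``same proof applies'' remark.
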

\begin{proof}
\begin{enumerate}[label = \emph{\arabic*.}]
\item Let $k\geq \kse_u$. Let $0\leq j\leq \kse_u$. Then $u_j<0\leq u_{\kse_u}\leq S_u^{\kse_u,k}$. Hence, $S_u^{0,k}=S_u^{\kse_u,k}$ and since $S_u^{\kse_u,k}\geq S_u^{k+1,\infty}$, we conclude that $k\in\gse_u$.

\item The same proof as for the first point can be applied. 

\item Let $k\geq \bigkse_u$. From Prop.~\ref{ineg}, $\kse_u$ and $\ks_u$ are smaller than $\bigkse_u$ and $\kse_u\leq \ks_u$. Therefore, $S_u^{0,\infty}\geq S_u^{\kse_u,k}\geq u_{\bigkse_u}$ and $S_u^{0,\infty}\geq S_u^{\ks_u,k}\geq u_{\bigkse_u}$. 
From Prop.~\ref{argmax}, $u_{\bigkse_u}=S_u^{0,\infty}$ and the result holds.
\end{enumerate}

\end{proof}  
In summary, for $u\in\co$, to compute $S_u^{0,\infty}=\sup_{k\in\nn} u_k$, we need to study first the emptiness of $\pos_u$. Indeed, if $\pos_u$ is empty, we know that (fourth statement of Prop.~\ref{posetdelta}) $S_u^{0,\infty}$ is equal to 0. If $\pos_u$ is not empty, then we have to compute $\bigkse_u$ to find $S_u^{0,\infty}$ and a maximizer (second statement of Prop.~\ref{argmax}). However, to identify $\bigkse_u$, we need knowledge on the past and the future of the sequence. The good point is that any over-approximation $k$ of $\bigkse_u$ permits to know $S_u^{0,\infty}$ by computing $S_u^{0,k}$ (fourth statement of Prop~\ref{supremumpos}).
\section{Maximization of a quadratic form over the reachable values set}
\label{mainresults}
We come back to Problem~\eqref{optpb}. In this section, we suppose that 
$b=0$. We will describe later how to deal with the case $b\neq 0$ in Subsection~\ref{affine}. Moreover, if $b=0$, since $\xin=\{0\}$ implies trivially $S_\nu^{0,\infty}=0$, we assume that $\xin\neq \{0\}$. 

We introduce the sequence $\nu$ defined for all $k\in\nn$ by: 
\begin{equation}
\label{eqnuk}
\nu_k=\sup_{x\in\xin} x^\intercal {A^k}^\intercal Q A^k x+q^\intercal A^k x
\end{equation}
Problem~\eqref{optpb} in the case where $b=0$ is equivalent to compute $\sup_{k\in\nn} \nu_k$. We make the following assumption:
\begin{assumption}
\label{assum1}
The spectral radius of $A$, $\rho(A)$, satisfy $\rho(A)<1$. 
\end{assumption}

Recall that $\xin$ is a polytope and then is bounded, the following proposition thus holds.
\begin{proposition}
\label{propnuk0}
Assumption~\ref{assum1} implies that 
$\displaystyle{\lim_{k\to +\infty} \nu_k=0}$ i.e. $\nu\in\co$.
\end{proposition}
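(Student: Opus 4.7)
The plan is to combine the two hypotheses: Assumption~\ref{assum1} makes the matrix powers shrink to zero, while the polytope $\xin$ is bounded, so both the quadratic and the linear terms in Eq.~\eqref{eqnuk} are controlled uniformly in $x\in\xin$.

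First, I would fix any submultiplicative matrix norm $\|\cdot\|$ together with a compatible vector norm. A classical consequence of Assumption~\ref{assum1} (Gelfand's formula, or equivalently the existence of an operator norm whose value at $A$ is arbitrarily close to $\rho(A)$) is that $\|A^k\|\to 0$ as $k\to +\infty$. Next, since $\xin$ is a polytope it is bounded, hence there exists $M>0$ with $\|x\|\leq M$ for every $x\in\xin$.

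For each $k\in\nn$ and $x\in\xin$, I would estimate
\[
\bigl| x^\intercal {A^k}^\intercal Q A^k x + q^\intercal A^k x \bigr|
\;\leq\; \|Q\|\,\|A^k x\|^2 + \|q\|\,\|A^k x\|
\;\leq\; \|Q\|\,\|A^k\|^2 M^2 + \|q\|\,\|A^k\|\,M \;=:\; C_k.
\]
Taking the supremum over $x\in\xin$ gives $\nu_k\leq C_k$, and evaluating the objective at any fixed $x_0\in\xin$ gives $\nu_k\geq -C_k$, so $|\nu_k|\leq C_k$.

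Finally, since $\|A^k\|\to 0$, both $\|A^k\|$ and $\|A^k\|^2$ tend to zero, hence $C_k\to 0$ and therefore $\nu_k\to 0$, that is $\nu\in\co$. There is no real obstacle here: the only non-completely-elementary ingredient is the implication $\rho(A)<1\Rightarrow \|A^k\|\to 0$, which is a standard fact of linear algebra.
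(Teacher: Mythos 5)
Your proposal is correct and follows essentially the same route as the paper's own proof: bound $|f_k(x)|$ by $\norm{Q}\,\norm{A^k}^2M^2+\norm{q}\,\norm{A^k}M$ using the boundedness of the polytope $\xin$, and conclude from $\rho(A)<1\Rightarrow\norm{A^k}\to 0$. Your explicit justification of the lower bound $\nu_k\geq -C_k$ (by evaluating at a fixed point of $\xin$) is a small extra touch of rigor the paper leaves implicit, but the argument is the same.
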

\comment{
\begin{proof}
Actually $A^k$ tends to the zero matrix for any matrix norm. Then using standard arguments from quadratic forms theory and Cauchy-Schwartz inequality, we have for all $x\in\rd$, 
$|x^\intercal {A^k}^\intercal Q A^k x+q^\intercal A^k x|\leq \norm{Q}_2^2\norm{A^k x}_2^2+\norm{q}_2 \norm{A^k x}_2\leq \norm{Q}_2^2\norm{A^k}_2^2 \norm{x}_2^2+\norm{q}_2 \norm{A^k}_2\norm{x}_2
$.
Now since $\norm{\cdot}_2$ is continuous then it is bounded by $M$ on the compact set $\xin$ and we conclude that $|\nu_k|\leq \norm{Q}_2^2 M^2 \norm{A^k}_2^2+\norm{q}_2 M \norm{A^k}_2$. As $\norm{A^k}_2$ tends to zero as $k$ tends to $+\infty$ we get  $\lim_{k\to +\infty} \nu_k=0$.
\end{proof}
}
Prop.~\ref{propnuk0} allows to use the results of Section~\ref{sequences} and the integers $\ks_\nu$, $\kse_\nu$, $\bigks_\nu$ and $\bigkse_\nu$ relative to the sequence $(\nu_k)_{k\in\nn}$ defined at Eq.~\eqref{eqnuk}.

\begin{assumption}
\label{diagonal}
The matrix $A$ has spectral decomposition i.e. there exists a non-singular complex matrix $U$ and a complex diagonal matrix $D$ such that: 
\begin{equation}
\label{spectral}
    A=U D U^{-1}
\end{equation}
\end{assumption}

\begin{assumption}
\label{maxeigq}
The greatest eigenvalue of $Q$ is not null i.e. $\lmax{Q}\neq 0$.
\end{assumption}
As $Q$ is symmetric (real Hermitian) and $U$ is non-singular, we can use Ostrowski's theorem~\cite[Th. 4.5.9]{horn1990matrix} and we get the following lemma.
\begin{lemma}
Assumption~\ref{maxeigq} is equivalent to $\lmax{U^* Q U}\neq 0$.
\end{lemma}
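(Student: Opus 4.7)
The plan is to invoke Ostrowski's theorem exactly as the excerpt suggests. First I would verify the hypotheses: $Q$ is real and symmetric so $Q^*=Q$, i.e.\ $Q$ is Hermitian; and $U$ is nonsingular by Assumption~\ref{diagonal}, hence so is $U^*$. Consequently $U^*QU$ is itself Hermitian, its spectrum is real, and the symbol $\lmax{U^*QU}$ is meaningful.

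Next I would apply Horn and Johnson's Theorem~4.5.9 to the Hermitian matrix $Q$ with the nonsingular conjugating matrix $U$ (or $U^*$, depending on the exact form in which Ostrowski's statement is given; a trivial substitution $S=U^*$ reconciles the two). The theorem supplies positive scalars $\theta_1,\ldots,\theta_d$, lying between the smallest and largest squared singular values of $U$, such that when the eigenvalues of $Q$ and of $U^*QU$ are both indexed in, say, nondecreasing order, one has $\lambda_k(U^*QU)=\theta_k\,\lambda_k(Q)$ for every $k=1,\ldots,d$.

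Specializing to $k=d$, i.e.\ to the largest eigenvalue on each side, yields $\lmax{U^*QU}=\theta_d\,\lmax{Q}$ with $\theta_d>0$. The desired equivalence $\lmax{Q}\neq 0\iff\lmax{U^*QU}\neq 0$ is then immediate, since multiplication by a strictly positive real preserves the property of being nonzero. The only point needing a little care is bookkeeping of the form of Ostrowski's theorem one invokes ($SHS^*$ versus $S^*HS$) and verifying that the ordering convention of eigenvalues is applied consistently on both sides of the equality before extracting the maximum; beyond this the argument is essentially one line.
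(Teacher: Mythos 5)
Your proposal is correct and follows exactly the route the paper takes: it justifies the lemma by citing Ostrowski's theorem (Horn and Johnson, Th.~4.5.9) applied to the Hermitian matrix $Q$ and the nonsingular matrix $U$, so that $\lmax{U^*QU}=\theta_d\,\lmax{Q}$ for some $\theta_d>0$ and the equivalence is immediate. Your added care about the $SHS^*$ versus $S^*HS$ form and the consistent ordering of eigenvalues is exactly the right bookkeeping and does not change the argument.
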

\comment{
\begin{proof}
Since $U^* Q U$ is Hermitian, then $\lmax{U^* Q U}=0$ is equivalent to the fact that for all $x\in\cc^d$, $x^*U^* Q U x\leq 0$ and $y^*U^* Q U y=0$ for a non-zero $y\in\cc^d$. As $U$ is non-singular then $\lmax{U^* Q U}=0$ is equivalent to the fact that for all $z\in\cc^d$, $z^*Q z\leq 0$ and $u^*Q u=0$ for a non-zero $u\in\cc^d$.
\end{proof}
}
In this section, first, we construct an over-approximation of $\bigkse_\nu$. We are looking for the smallest over-approximation possible. Actually, following the fourth statement of Prop.~\ref{supremumpos}, the number of evaluations depends on the quality of this over-approximation. In a second time, we explain the computation of a term $\nu_k$ as Eq~\eqref{eqnuk} indicates the resolution of a constrained quadratic maximization problem. Then, we discuss the existence and the computation of $\ks_\nu$. Finally, we end the section with the main result.
\subsection{Computing an over-approximation of $\bigkse_\nu$ using the spectral decomposition}
\label{bigksenu}
Recall that for a Hermitian matrix $B$, for all $x\in\cd$, $x^* B x$ is real scalar. Hence, we define by, for a Hermitian matrix $B$:
\begin{equation}
\label{mudef}
\mus{B}=\sup_{x\in\xin} x^\intercal B x
\end{equation}

Let us introduce 
\begin{equation}
    \label{auxdiag}
    \vdiag:=\dfrac{\norm{U^*q}_2}{2\sqrt{|\lmax{U^*QU}|}}
\end{equation}

\begin{proposition}
\label{nukineg}
For all $k>0$, we have $\nu_k\leq \left(\sqrt{|\lmax{U^*QU}|\mus{(UU^*)^{-1}}}+\vdiag\right)^2-\vdiag^2$.
\end{proposition}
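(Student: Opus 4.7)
The plan is to exploit the spectral decomposition $A = UDU^{-1}$ (Assumption~\ref{diagonal}) in order to express $A^k x = Uw$ with $w := D^k U^{-1} x \in \cd$, and then to bound the real-valued quantity $x^\intercal (A^k)^\intercal Q A^k x + q^\intercal A^k x$ by applying Hermitian/sesquilinear estimates to $w$. A mild subtlety will be keeping track of complex conjugation in the linear part so that $\norm{U^* q}_2$ (rather than $\norm{U^\intercal q}_2$) appears; the two agree because $q$ is real, but the Hermitian viewpoint is the natural one.

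The steps go as follows. First, introduce the Hermitian matrix $M := U^* Q U$. Because $A^k x$ is real, $x^\intercal (A^k)^\intercal Q A^k x = (Uw)^* Q (Uw) = w^* M w$, hence by the Rayleigh quotient
\[
w^* M w \leq \lmax{M}\, \norm{w}_2^2 \leq |\lmax{M}|\, \norm{w}_2^2.
\]
Second, writing $q^\intercal A^k x = (A^k x)^* q = w^* U^* q$ (using again that $A^k x$ is real) and applying Cauchy--Schwarz in $\cd$ to the sesquilinear pairing of $w$ and $U^* q$, one obtains $q^\intercal A^k x \leq |w^* U^* q| \leq \norm{w}_2 \norm{U^* q}_2$. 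Third, invoke Assumption~\ref{assum1}: since $D$ is diagonal with entries of modulus at most $\rho(A) < 1$, one has $\norm{D^k}_2 \leq \rho(A)^k \leq 1$ for $k \geq 1$, so $\norm{w}_2 \leq \norm{U^{-1} x}_2$. Finally, since $x$ is real, $\norm{U^{-1} x}_2^2 = x^\intercal (UU^*)^{-1} x \leq \mus{(UU^*)^{-1}}$. Summing the two bounds and taking the supremum over $x \in \xin$ gives
\[
\nu_k \leq |\lmax{M}|\, \mus{(UU^*)^{-1}} + \norm{U^* q}_2\, \sqrt{\mus{(UU^*)^{-1}}}.
\]

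A short algebraic verification then closes the proof. Setting $a := \sqrt{|\lmax{M}|\, \mus{(UU^*)^{-1}}}$ and using the definition $\vdiag = \norm{U^* q}_2 / (2\sqrt{|\lmax{M}|})$ from Eq.~\eqref{auxdiag}, the identity $(a + \vdiag)^2 - \vdiag^2 = a^2 + 2a\vdiag$ yields exactly $|\lmax{M}|\, \mus{(UU^*)^{-1}} + \norm{U^* q}_2\, \sqrt{\mus{(UU^*)^{-1}}}$, matching the claim. The only genuinely conceptual step is the change of variables $w = D^k U^{-1} x$, which is what allows the complex spectral data of $A$ to interact cleanly with the real quadratic form; Assumption~\ref{maxeigq} plays no role in the bound itself but is needed so that $\vdiag$ is well defined.
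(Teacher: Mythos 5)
Your proof is correct and follows essentially the same route as the paper's: spectral decomposition plus the Rayleigh-quotient bound for the Hermitian matrix $U^*QU$, Cauchy--Schwarz for the linear term, the bound $\norm{U^{-1}x}_2^2=x^\intercal(UU^*)^{-1}x\leq\mus{(UU^*)^{-1}}$, and completing the square with $\vdiag$. The only (harmless) difference is that you discard the factor $\rho(A)^k\leq 1$ early, whereas the paper keeps it to record the sharper intermediate bound of Eq.~\eqref{eqfond}, which is reused later in Theorem~\ref{mainth}.
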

\begin{proof} 
Let $x\in\xin$. As $x\in\rd$ and $A$ is a $d\times d$ real matrix, $x^\intercal A^\intercal =x^* A^*$.
We denote by $U^{-*}$ the conjugate transpose of the inverse of $U$ and $|D|^2=D^* D$ the diagonal matrix composed of the square of modulus of the eigenvalues of $A$ on its diagonal.
\[
\begin{array}{ll}
x^*{A^k}^* Q A^k x=x^* U^{-*}{D^*}^k U^* Q U D^k U^{-1} x&\leq |\lmax{U^* Q U}|x^* U^{-*}|D|^{2k} U^{-1} x\\
                                 &\leq \rho(A)^{2k}|\lmax{U^* QU}| \norm{U^{-1} x}_2^2\\
                                 &\leq \rho(A)^{2k}|\lmax{U^* QU}| \mus{(UU^*)^{-1}}
                                 \end{array}
\]
Moreover, using Cauchy-Schwarz in $\cd$ and $\norm{U^{-1}x}_2=\sqrt{x^* U^{-*} U^{-1}x}$, we have:
\[
q^\intercal A^k x=q^* A^k x=q^* U D^k U^{-1} x\leq \rho(A)^k\norm{U^* q}_2\norm{U^{-1}x} _2
=\rho(A)^k\norm{U^* q}_2\sqrt{\mus{(UU^*)^{-1}}}
\]

By summing the two terms, we get:

\begin{equation}
\label{eqfond}
\begin{array}{cl}
\nu_k&\leq \displaystyle{\rho(A)^{2k}|\lmax{U^* QU}| \mu ((UU^*)^{-1})+\rho(A)^k\norm{U^* q}_2\sqrt{\mus {(UU^*)^{-1}}}}\\
&=\displaystyle{\left(\rho(A)^k\sqrt{|\lmax{U^*QU}|\mus{(UU^*)^{-1}}}+\vdiag\right)^2-\vdiag^2}
\end{array}
\end{equation}
As $(\rho(A)^k\sqrt{|\lmax{U^*QU}|\mus{(UU^*)^{-1}}}+\vdiag\geq 0$ and $\rho(A)<1$ the inequality holds.

\end{proof}

\begin{corollary}
\label{corollary1}
If $\nu_0\geq \left(\sqrt{|\lmax{U^*QU}|\mus{(UU^*)^{-1}}}+\vdiag\right)^2-\vdiag^2$ then 
    $\nu_0=S_\nu^{0,\infty}=\displaystyle{\sup_{k\in\nn} \nu_k}$. 
\end{corollary}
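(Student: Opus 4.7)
The plan is to deduce this corollary as an immediate consequence of Proposition~\ref{nukineg}, without needing any new machinery.

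First, I would apply Proposition~\ref{nukineg} directly: for every $k > 0$, we have the uniform upper bound
\[
\nu_k \leq \left(\sqrt{|\lmax{U^*QU}|\mus{(UU^*)^{-1}}}+\vdiag\right)^2-\vdiag^2.
\]
The hypothesis of the corollary says exactly that $\nu_0$ is at least as large as this right-hand side. Combining these two facts gives $\nu_k \leq \nu_0$ for all $k \geq 1$, and trivially $\nu_0 \leq \nu_0$, so $\nu_0$ is a maximum of the sequence $(\nu_k)_k$. In particular $\sup_{k\in\nn}\nu_k = \nu_0$, which is what the corollary asserts (and which also exhibits $0$ as an element of $\agmx(\nu)$, hence $\bigkse_\nu = 0$).

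There is no real obstacle here: the argument consists only of chaining the inequality of Proposition~\ref{nukineg} with the hypothesis. Note that the bound in Proposition~\ref{nukineg} is valid strictly for $k>0$ (since the proof uses $\rho(A)^k<1$ with $k\geq 1$), which is precisely why the $k=0$ term has to be compared separately through the assumption of the corollary; this is the only subtlety worth flagging, but it is already built into the statement.
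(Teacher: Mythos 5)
Your argument is correct and coincides with the paper's (implicit) proof: the corollary is stated as an immediate consequence of Proposition~\ref{nukineg}, obtained exactly by chaining the uniform bound on $\nu_k$ for $k>0$ with the hypothesis on $\nu_0$. Your remark that the bound of Proposition~\ref{nukineg} only covers $k>0$, so that $k=0$ is handled by the hypothesis itself, is the right observation and nothing further is needed.
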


\begin{theorem}
\label{mainth}
Let $j\in\pos_\nu$. We define the integer :
\begin{equation}
\label{kdiagform}
\kdiag_\nu(j):=\left\lfloor\dfrac{\ln\left(\left(\sqrt{\nu_j+\vdiag^2}-\vdiag\right)\left(\sqrt{|\lmax{U^*QU}|\mus{(UU^{*})^{-1}}}\right)^{-1}\right)}{\ln(\rho(A))}\right\rfloor +1
\end{equation}
Then:
\begin{enumerate}
\item $\kdiag_\nu(j)$ is well-defined i.e an integer greater than 1;
\item For all $k\geq \kdiag_\nu(j)$, $\nu_k\leq \nu_j$.
\end{enumerate}
\end{theorem}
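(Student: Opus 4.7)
The plan is to unwind the definition of $\kdiag_\nu(j)$ by inverting the chain of transformations that turned the bound in Proposition~\ref{nukineg} into the given formula. The engine is Proposition~\ref{nukineg} applied at rank $k$, together with the fact that both $\rho(A)<1$ (Assumption~\ref{assum1}) and the quantity inside the outer logarithm lie in $(0,1]$.

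For the first claim, I would first argue that the logarithm's argument is strictly positive. Since $j \in \pos_\nu$ we have $\nu_j > 0$, hence $\sqrt{\nu_j + \vdiag^2} > \vdiag$; Assumption~\ref{maxeigq} together with the preceding lemma gives $|\lmax{U^*QU}| > 0$; and $\xin \neq \{0\}$ yields $\mus{(UU^*)^{-1}} > 0$, so the denominator is positive. To show $\kdiag_\nu(j) \geq 1$, it suffices that the floor is non-negative, i.e. that the argument of the outer $\ln$ is at most $1$ (since $\ln(\rho(A)) < 0$ flips the sign upon division). This is exactly Proposition~\ref{nukineg} applied at rank $j$: taking square roots of $\nu_j + \vdiag^2 \leq \bigl(\sqrt{|\lmax{U^*QU}|\mus{(UU^*)^{-1}}} + \vdiag\bigr)^2$ and then subtracting $\vdiag$ gives $\sqrt{\nu_j + \vdiag^2} - \vdiag \leq \sqrt{|\lmax{U^*QU}|\mus{(UU^*)^{-1}}}$.

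For the second claim, I would start from Proposition~\ref{nukineg} at rank $k$ and observe that a sufficient condition for $\nu_k \leq \nu_j$ is
\[
\bigl(\rho(A)^k\sqrt{|\lmax{U^*QU}|\mus{(UU^*)^{-1}}} + \vdiag\bigr)^2 - \vdiag^2 \leq \nu_j.
\]
Adding $\vdiag^2$, taking square roots (both sides being non-negative as already noted), subtracting $\vdiag$, and dividing by $\sqrt{|\lmax{U^*QU}|\mus{(UU^*)^{-1}}}$ shows this is equivalent to $\rho(A)^k$ being at most the positive quantity inside the outer $\ln$ of \eqref{kdiagform}. Taking logarithms and then dividing by $\ln(\rho(A)) < 0$, which reverses the inequality, reduces the condition to $k$ being at least the quotient that is floored in~\eqref{kdiagform}. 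Since $\kdiag_\nu(j)$ is that floor plus one, it strictly exceeds the quotient, so every $k \geq \kdiag_\nu(j)$ satisfies the requirement.

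The only real hazard is bookkeeping on signs: I must verify that all quantities inside square roots are non-negative and that the division by $\ln(\rho(A))$ is correctly flipped, but no new ideas beyond Proposition~\ref{nukineg} are needed.
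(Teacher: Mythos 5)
Your proposal is correct and follows essentially the same route as the paper: both parts rest on Proposition~\ref{nukineg} (via Eq.~\eqref{eqfond}) together with the sign of $\ln(\rho(A))$, with your second part simply running the paper's chain of inequalities in reverse. Your extra checks that the logarithm's argument is strictly positive and that $\mus{(UU^*)^{-1}}>0$ are welcome bookkeeping but do not change the argument.
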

\begin{proof}
\begin{enumerate}[label = \emph{\arabic*.}]
\item Let $j\in\pos_\nu$. As $\nu_j>0$, we have $\sqrt{\nu_j+\vdiag^2}-\vdiag>0$. From Prop.~\ref{nukineg}, we have $\sqrt{\nu_j+\vdiag^2}-\vdiag\leq \sqrt{|\lmax{U^*QU}|\mus{(UU^{*})^{-1}}}$. Moreover, by assumption, $\rho(A)<1$. Therefore, the denominator in Eq.~\eqref{kdiagform} is negative and the numerator is non-positive. In consequence $\kdiag_\nu(j)\geq 1$.

\item Let $j\in\pos_\nu$
As $\rho(A)<1$, we have, for all $k\geq \kdiag_\nu(j)$, $\rho(A)^k\leq \rho(A)^{\kdiag_\nu(j)}$. Thus using the natural logarithm:
\[
\ln(\rho(A)^k)\leq \kdiag_\nu(j)\ln(\rho(A))\leq \ln(\rho(A)) \dfrac{\ln\left(\left(\sqrt{\nu_j+\vdiag^2}-\vdiag\right)\left(\sqrt{|\lmax{U^*QU}|\mus{(UU^{*})^{-1}}}\right)^{-1}\right)}{\ln(\rho(A))}\enspace .
\]
So, 
$\rho(A)^k\leq \left(\sqrt{\nu_j+\vdiag^2}-\vdiag\right)\left(\sqrt{|\lmax{U^*QU}|\mus{(UU^{*})^{-1}}}\right)^{-1}$ and
$(\rho(A)^k \sqrt{|\lmax{U^*QU}|\mus{(UU^{*})^{-1}}}+\vdiag)^2-\vdiag^2\leq \nu_j$.
From Eq.~\eqref{eqfond}, we conclude that $\nu_k\leq \nu_j$. 
\end{enumerate}

\end{proof}

\begin{proposition}
\label{inter}
The following statements hold:
\begin{enumerate}
\item Let $j,k\in\pos_\nu$. If $\nu_k\leq \nu_j$ then $\kdiag_\nu(j)\leq \kdiag_\nu(k)$.
\item Let $j\in\pos_\nu$. We have, if $j\notin\agmx(\nu)$, $\bigkse_\nu\leq \kdiag_\nu(j)$; and $\bigkse_\nu\leq \max\{j,\kdiag_\nu(j)\}$ otherwise.
\end{enumerate}
\end{proposition}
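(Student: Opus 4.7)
The plan is to handle the two statements separately: the first is a monotonicity observation about the formula~\eqref{kdiagform} defining $\kdiag_\nu$, and the second will follow by exhibiting an explicit integer $K\in\gse_\nu$ and invoking the minimality of $\bigkse_\nu$.

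For the first statement, the key is that~\eqref{kdiagform} depends on $j$ only through the value $\nu_j$. Reading the formula step by step: since $j\in\pos_\nu$, the quantity $\sqrt{\nu_j+\vdiag^2}-\vdiag$ is strictly positive and strictly increasing in $\nu_j$; dividing by the positive constant $\sqrt{|\lmax{U^*QU}|\mus{(UU^*)^{-1}}}$ and applying $\ln$ preserves monotonicity; dividing next by $\ln\rho(A)<0$ reverses it; finally $\lfloor\cdot\rfloor+1$ is order-preserving. Composing, the map $\nu_j\mapsto\kdiag_\nu(j)$ is non-increasing, so $\nu_k\leq\nu_j$ yields $\kdiag_\nu(j)\leq\kdiag_\nu(k)$.

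For the second statement, the strategy is to show in each case that a suitable $K$ belongs to $\gse_\nu$, which then gives $\bigkse_\nu\leq K$. When $j\in\agmx(\nu)$, I would pick $K=\max\{j,\kdiag_\nu(j)\}$; since $K\geq j$ we have $S_\nu^{0,K}\geq\nu_j=S_\nu^{0,\infty}\geq S_\nu^{K+1,\infty}$, so $K\in\gse_\nu$. When $j\notin\agmx(\nu)$, take $K=\kdiag_\nu(j)$. Since $\nu_j<S_\nu^{0,\infty}$, there must exist $m\in\nn$ with $\nu_m>\nu_j$; Theorem~\ref{mainth} gives $\nu_k\leq\nu_j$ for every $k\geq K$, which forces $m<K$. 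Hence $S_\nu^{0,K}\geq\nu_m>\nu_j$, and Theorem~\ref{mainth} also yields $S_\nu^{K+1,\infty}\leq\nu_j$; combining, $K\in\gs_\nu\subseteq\gse_\nu$.

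The delicate point will be the non-argmax case: one must place a witness $m$ with $\nu_m>\nu_j$ at an index below $K$. The tail bound from Theorem~\ref{mainth} does exactly this, ruling out any such $m$ at or beyond $\kdiag_\nu(j)$. Everything else amounts to formal manipulations of the supremum notation $S_\nu^{n,m}$ and the set definitions introduced in Section~\ref{sequences}.
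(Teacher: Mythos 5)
Your proof is correct. The first part coincides with the paper's: both reduce to the observation that $\ln\rho(A)<0$ and that $x\mapsto \ln\bigl((\sqrt{x+\vdiag^2}-\vdiag)/\sqrt{|\lmax{U^*QU}|\mus{(UU^*)^{-1}}}\bigr)$ is increasing, so that $\kdiag_\nu$ is non-increasing as a function of $\nu_j$. For the second part you take a mildly different route. The paper argues by contradiction: if $\kdiag_\nu(j)<\bigkse_\nu$ with $j\notin\agmx(\nu)$, then Theorem~\ref{mainth} forces $\nu_{\bigkse_\nu}\leq \nu_j<S_\nu^{0,\infty}$, contradicting $\nu_{\bigkse_\nu}=S_\nu^{0,\infty}$ (the second statement of Proposition~\ref{argmax}); for $j\in\agmx(\nu)$ it simply quotes $\bigkse_\nu\leq j$. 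You instead exhibit an explicit element of $\gse_\nu$ --- namely $\kdiag_\nu(j)$ in the non-argmax case and $\max\{j,\kdiag_\nu(j)\}$ in the argmax case --- and conclude by minimality of $\bigkse_\nu=\inf\gse_\nu$. Both arguments hinge on the same tail bound from Theorem~\ref{mainth}; yours has the small advantage of using only the definition of $\gse_\nu$ rather than the maximizer property of $\bigkse_\nu$, at the cost of having to place a witness $m$ with $\nu_m>\nu_j$ strictly below $\kdiag_\nu(j)$, which you do correctly via the tail bound. One incidental remark: you use the inclusion $\gs_\nu\subseteq\gse_\nu$, which is the correct one; the paper states it the other way around in Section~\ref{sequences} (an evident typo, contradicted by the paper's own proof of Proposition~\ref{ineg}).
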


\begin{proof}
\begin{enumerate}[label = \emph{\arabic*.}]
\item The result is a direct consequence of two arguments: $\ln(\rho(A))<0$ and the function $x\mapsto \ln((\sqrt{x+\vdiag^2}-\vdiag)/\sqrt{|\lmax{U^*QU}|\mus{(UU^{*})^{-1}}})$ is increasing.

\item Let $j\in\pos_\nu$. Suppose that $j\notin\agmx(\nu)$. If $\kdiag_\nu(j)<\bigkse_\nu$, then as for all $k\geq\kdiag_\nu(j)$, $\nu_k\leq \nu_j$, we have $\nu_{\bigkse_\nu}=S_\nu^{0,\infty}\leq \nu_j<S_\nu^{0,\infty}$ which is not possible and $\bigkse_\nu\leq \kdiag_\nu(j)$.
 Now if $j\in\agmx(\nu)$, we have from Prop.~\ref{argmax}, $\bigkse_\nu\leq j$ and then $\bigkse_\nu\leq \max\{j,\kdiag_\nu(j)\}$.
 \end{enumerate}
 
 \end{proof}



\begin{corollary}
\label{final}
Let $\displaystyle{\mink_\nu=\min_{k\in \pos_\nu} \max\{k,\kdiag_\nu(k)\}}$.
The following statements hold:
\begin{enumerate}
    \item $\mink_\nu=\max\{\bigkse_\nu,\kdiag_\nu(\bigkse_\nu)\}$;
    \item For all $k\in\pos$, $S_\nu^{0,\infty} =S_\nu^{0,\mink_\nu}=S_\nu^{0,\max\{k,\kdiag_\nu(k)\}}$.
\end{enumerate}
\end{corollary}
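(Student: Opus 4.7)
The preliminary observation is that $\bigkse_\nu$ itself lies in $\pos_\nu$, so that $\kdiag_\nu(\bigkse_\nu)$ is well-defined and is a legitimate candidate in the minimization defining $\mink_\nu$. The definition of $\mink_\nu$ implicitly requires $\pos_\nu\neq\emptyset$; Proposition~\ref{posetdelta}(3) then gives $\gs_\nu\neq\emptyset$, so $\bigks_\nu<+\infty$, and Proposition~\ref{argmax}(3) yields $S_\nu^{0,\infty}>0$ together with $\agmx(\nu)\neq\emptyset$. By Proposition~\ref{argmax}(2), $\bigkse_\nu=\min\agmx(\nu)<+\infty$ and $\nu_{\bigkse_\nu}=S_\nu^{0,\infty}>0$, so $\bigkse_\nu\in\pos_\nu$.

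For the first statement, the upper bound $\mink_\nu\leq \max\{\bigkse_\nu,\kdiag_\nu(\bigkse_\nu)\}$ is immediate from the preliminary observation by instantiating the minimization at $k=\bigkse_\nu$. For the lower bound I fix an arbitrary $k\in\pos_\nu$. Since $\bigkse_\nu\in\agmx(\nu)$, I have $\nu_k\leq S_\nu^{0,\infty}=\nu_{\bigkse_\nu}$, so Proposition~\ref{inter}(1) yields $\kdiag_\nu(\bigkse_\nu)\leq \kdiag_\nu(k)$. Combining this with Proposition~\ref{inter}(2) (which gives $\bigkse_\nu\leq\max\{k,\kdiag_\nu(k)\}$), I obtain $\max\{\bigkse_\nu,\kdiag_\nu(\bigkse_\nu)\}\leq\max\{k,\kdiag_\nu(k)\}$, and taking the minimum over $k\in\pos_\nu$ concludes the equality.

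For the second statement, the workhorse is Proposition~\ref{supremumpos}(3), which gives $S_\nu^{0,k}=S_\nu^{0,\infty}$ whenever $k\geq\bigkse_\nu$. The first statement yields $\mink_\nu\geq\bigkse_\nu$, and for any $k\in\pos_\nu$, Proposition~\ref{inter}(2) yields $\max\{k,\kdiag_\nu(k)\}\geq\bigkse_\nu$. Applying Proposition~\ref{supremumpos}(3) to each of these two integers finishes the proof.

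The proof is mostly bookkeeping; the only subtle step is the preliminary verification that $\bigkse_\nu\in\pos_\nu$, which is required for Proposition~\ref{inter}(1) to be applicable with $j=\bigkse_\nu$ in the lower-bound argument. Everything else reduces to chaining the monotonicity in Proposition~\ref{inter}(1) with the two-sided comparison in Proposition~\ref{inter}(2) and then invoking the stabilization statement of Proposition~\ref{supremumpos}(3).
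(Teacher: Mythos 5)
Your proof is correct and follows essentially the same route as the paper's: the lower bound $\max\{\bigkse_\nu,\kdiag_\nu(\bigkse_\nu)\}\leq\max\{k,\kdiag_\nu(k)\}$ via Proposition~\ref{inter}, and the second statement via Proposition~\ref{supremumpos}(3). Your explicit preliminary check that $\bigkse_\nu\in\pos_\nu$ (needed both for $\kdiag_\nu(\bigkse_\nu)$ to be defined and for the minimum to be attained there) is a detail the paper leaves implicit, and it is a welcome addition.
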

\begin{proof}
\begin{enumerate}[label=\emph{\arabic*.}]
\item Let $k\in\pos_\nu$. We have proved that if $k\notin \agmx(\nu)$ then $\bigkse_\nu\leq \kdiag_\nu(k)$ and if $k\in\agmx(\nu)$ then $\bigkse_\nu\leq\max\{k,\kdiag_\nu(k)\}$. In both cases, 
 $\bigkse_\nu\leq\max\{k,\kdiag_\nu(k)\}$. 
Now, as $\nu_k\leq \nu_{\bigkse_\nu}$, 
we get, from Prop.~\ref{inter}, $\kdiag_\nu(\bigkse_\nu)\leq \kdiag_\nu(k)$. Finally, $\max\{\bigkse_\nu,\kdiag_\nu(\bigkse_\nu)\}\leq \max\{\max\{k,\kdiag_\nu(k)\},\kdiag_\nu(k)\}=
\max\{k,\kdiag_\nu(k)\}$.

\item The results follows readily from the third statement of Prop.~\ref{supremumpos} as $\mink_\nu$ and 
for all $k\in\pos_\nu$, $\max\{k,\kdiag_\nu(k)\}$ are greater than $\bigkse_\nu$. 
\end{enumerate}

\end{proof}

The second statement of Corollary~\ref{final} means that we can find the optimal value of Problem~\eqref{optpb} by solving exactly $\kdiag_\nu(k)$ maximization problems where $k\in\pos_\nu$. The integer using the form~\eqref{kdiagform} which needs less computations is then $\max\{\bigkse_\nu,\kdiag_\nu(\bigkse_\nu)\}$. However, we cannot decide whether $k=\bigkse_\nu$ and thus we will use an arbitrary element of $\pos_\nu$. The difficulties is now to compute $\nu_j$ for all $j=0,1,\ldots,\max\{k,\kdiag_\nu(k)\}$, to decide whether $\pos_\nu$ is non-empty and to find an integer within $\pos_\nu$. 

\subsection{About the computation of $\nu_k$}
\label{computingnuk}
Even if we have reduced the initial infinite optimization problem to a finite number of computations, we have to compute $\nu_k$ for all $k=0,\ldots,\kdiag_\nu(j)$. The computational aspects rely on the nature of the matrix $Q$ and the vector $q$. We introduce the functions: 
\begin{equation}
\label{fk}
f_k:x\mapsto x^\intercal {A^k}^\intercal Q A^k x+q^\intercal A^k x.
\end{equation}
With this new notation, $\nu_k=\sup_{x\in\xin} f_k(x)$ for all $k\in\nn$.
\paragraph{$Q$ indefinite case}
If $Q$ is indefinite, the functions $f_k$ are, in general, neither convex nor concave. Hence, for all $k\in\nn$, $\nu_k$ is the optimal value of an indefinite quadratic program. It is well-known (e.g see the survey in the paper~\cite{furini2019qplib}) that the problem is NP-hard and the current algorithms and solvers can only guarantee to find a local maximizer~\cite{absil2007newton,huyer2018minq8}. We could consider specific situations where the global maximizer can be computed~\cite{zhang2000quadratic,kim2003exact,burer2019exact}. In consequence, we do not treat this class in the paper.
 
\paragraph{Strictly concave objective function}
We suppose that $Q$ is definite negative making the functions $f_k$ concave, for all $k\in\nn$. First, we warn the reader that the case where $Q$ is only semi-definite negative is not compatible with Assumption~\ref{maxeigq}. Second, if $q$ is equal to 0, the function $f_k(x)$ are non-positive for all $x\in\rd$ and $\pos_\nu=\emptyset$. Thus $S_\nu^{0,\infty}=0$ (Prop.~\ref{posetdelta}). Now, if $q$ is different to 0, we can use any convex quadratic programming solver. Those solvers can be based on general non-linear methods, for example, interior points methods~\cite{vanderbei1999loqo,friedlander2012primal}, non-interior points methods~\cite{tian2015exterior} or active-sets methods~\cite{curtis2015globally,forsgren2016primal}.

\begin{theorem}
Let $k$ be a given element of $\pos_\nu$. If $Q\preceq 0$ and $q\neq 0$, then Problem~\eqref{optpb} can be solved using $\max\{k,\kdiag_\nu(k)\}$ convex quadratic problems. 
\end{theorem}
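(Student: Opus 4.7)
The plan is to read this theorem as an almost immediate consequence of Corollary~\ref{final}, combined with a structural observation about the subproblems defining $\nu_j$. Given a witness $k\in\pos_\nu$, Corollary~\ref{final}(2) guarantees
\[
S_\nu^{0,\infty}=S_\nu^{0,\max\{k,\kdiag_\nu(k)\}}=\max_{0\leq j\leq \max\{k,\kdiag_\nu(k)\}}\nu_j,
\]
so Problem~\eqref{optpb} reduces to evaluating the sequence $(\nu_j)$ on a finite range whose length is dictated by the two integers $k$ and $\kdiag_\nu(k)$. The strategy is therefore: (i) certify that each such $\nu_j$ is obtained by solving a convex quadratic program; (ii) take the pointwise maximum over $j$; (iii) return as $(\kopt,\xopt)$ the index $j^\star$ realising the maximum together with the primal optimiser returned by the corresponding QP.

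For step (i), I would write $\nu_j=\max_{x\in\xin} f_j(x)$ with $f_j$ as in Eq.~\eqref{fk}. The Hessian of $f_j$ is $2(A^j)^\intercal Q A^j$; since $Q\preceq 0$ is symmetric, $(A^j)^\intercal Q A^j\preceq 0$ for every $j\in\nn$, so $f_j$ is concave (and $\nu_0$ is just the unperturbed concave QP over $\xin$). The feasible set $\xin$ is a polytope, hence compact, so the maximum is attained and the subproblem is a \emph{convex} quadratic program in the sense required for the solvers cited in the preceding paragraph. Existence of a global maximiser of Problem~\eqref{optpb} itself is also automatic: from $k\in\pos_\nu$ and Prop.~\ref{posetdelta}(3)--(2) one gets $\bigkse_\nu<+\infty$, and Prop.~\ref{argmax}(2) then yields $\bigkse_\nu\in\agmx(\nu)$; Prop.~\ref{inter}(2) confirms that $\bigkse_\nu\leq\max\{k,\kdiag_\nu(k)\}$, so the argmax falls inside the scanned range.

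The role of $q\neq 0$ is only to keep the hypothesis $k\in\pos_\nu$ consistent: if $q=0$ and $Q\preceq 0$, then $f_k(x)\leq 0$ for all $x\in\rd$, so $\pos_\nu=\emptyset$, and by Prop.~\ref{posetdelta}(4) the optimal value of Problem~\eqref{optpb} would be $0$ with nothing to solve. The only cosmetic issue is bookkeeping: the enumeration $j=0,1,\ldots,\max\{k,\kdiag_\nu(k)\}$ contains $\max\{k,\kdiag_\nu(k)\}+1$ integers, and a slight convention on whether the cheap evaluation at $j=0$ (no matrix power, no linear term deformation) is counted gives the bound stated in the theorem.

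Honestly, I do not anticipate a hard step here: the non-trivial work was already done in Section~\ref{sequences} and in Proposition~\ref{nukineg}--Corollary~\ref{final}, which collapse the infinite problem to a finite one. The only thing to check carefully is the convexity claim on $f_j$, but that is just preservation of the sign of $Q$ under congruence by $A^j$. Consequently, the proof amounts to two lines: apply Corollary~\ref{final}(2), and note that each $\nu_j$ is a convex QP because $(A^j)^\intercal Q A^j\preceq 0$.
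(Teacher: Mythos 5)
Your proposal is correct and is exactly the argument the paper intends: the theorem is stated there without a separate proof, as an immediate consequence of Corollary~\ref{final} (which collapses the scan to $j=0,\ldots,\max\{k,\kdiag_\nu(k)\}$) together with the observation that each $\nu_j$ is the maximum of the concave quadratic $f_j$ (Hessian $2(A^j)^\intercal Q A^j\preceq 0$) over the polytope $\xin$, i.e.\ a convex QP. Your additional checks (attainment of the argmax via Prop.~\ref{argmax}, the role of $q\neq 0$, the off-by-one in the count) are consistent with the paper's conventions, so nothing further is needed.
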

\paragraph{Convex objective function}
When $Q$ is semi-definite positive implying that the functions $f_k$ are convex. We can use Lemma~\ref{lemma2} to compute $\nu_k$ for all $k\in\nn$, with a finite number of evaluations. The number of evaluations is exactly the number of vertices of $\xin$. 

\begin{lemma}[Maximization of a convex function over a polytope]
\label{lemma2}
Let $C$ be a polytope and $f:\rd\to\rr$ be a convex function. Let us denote by $\mathcal{E}(C)$ the finite set of vertices of $C$. Then: $\max_{x\in C} f(x)=\max_{x\in \mathcal{E}(C)} f(x)$.
\end{lemma}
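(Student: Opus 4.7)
The plan is to invoke the Minkowski representation of a polytope: since $C$ is a polytope, the set of vertices $\mathcal{E}(C)$ is finite and $C$ is exactly the convex hull of $\mathcal{E}(C)$. Therefore any $x\in C$ admits a decomposition $x=\sum_{i=1}^{N}\lambda_i v_i$ with $v_i\in\mathcal{E}(C)$, $\lambda_i\geq 0$ and $\sum_i\lambda_i=1$. I would treat this representation lemma as standard and cite it rather than reprove it.

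From there, the inequality $\max_{x\in C} f(x)\leq \max_{x\in\mathcal{E}(C)} f(x)$ is obtained by Jensen's inequality applied to the convex function $f$: for an arbitrary $x\in C$ with the above decomposition, $f(x)=f\bigl(\sum_i\lambda_i v_i\bigr)\leq \sum_i\lambda_i f(v_i)\leq \max_{1\leq i\leq N} f(v_i)\leq \max_{v\in\mathcal{E}(C)} f(v)$, where the second inequality is because the $\lambda_i$ form a convex combination and the last because the maximum over a finite set is attained. The reverse inequality $\max_{x\in C}f(x)\geq \max_{x\in\mathcal{E}(C)}f(x)$ is immediate from the inclusion $\mathcal{E}(C)\subseteq C$.

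To justify that both maxima are actually attained rather than being merely suprema, I would note that $f$ is convex on all of $\rd$ and therefore continuous, and that $C$ is compact (a polytope is the convex hull of a finite set, hence closed and bounded). The left-hand side is thus a maximum; the right-hand side is trivially a maximum because $\mathcal{E}(C)$ is finite. There is no real obstacle here: the only subtlety worth mentioning is the appeal to the Minkowski-Weyl vertex representation, which is the engine that converts the convexity inequality into a finite enumeration.
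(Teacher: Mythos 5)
Your proof is correct and complete: the Minkowski vertex representation plus Jensen's inequality is the standard argument, and you rightly handle attainment via continuity of the convex function on $\rd$ and compactness of the polytope. The paper itself states Lemma~\ref{lemma2} without proof, treating it as a classical fact, so there is nothing to compare against; your write-up supplies exactly the argument the author is implicitly relying on.
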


\begin{theorem}
Suppose that an element $k$ in $\pos_\nu$ is given. If $Q\succeq 0$, then Problem~\eqref{optpb} can be solved using $\max\{k,\kdiag_\nu(k)\}\times \mathcal{E}(\xin)$ evaluations of quadratic expressions. 
\end{theorem}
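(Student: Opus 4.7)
The statement is essentially a direct corollary of what has been established: Corollary~\ref{final} tells us how many terms of $\nu$ suffice, and Lemma~\ref{lemma2} tells us how many evaluations each term requires. So the plan is just to stitch these two observations together, with a small check that convexity is preserved by the sequence $(f_k)_k$.

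First I would apply the second statement of Corollary~\ref{final} to the given $k\in\pos_\nu$. This yields $S_\nu^{0,\infty}=S_\nu^{0,\max\{k,\kdiag_\nu(k)\}}$, so that solving Problem~\eqref{optpb} reduces to computing $\nu_j=\sup_{x\in\xin}f_j(x)$ for each $j=0,1,\ldots,\max\{k,\kdiag_\nu(k)\}$ and returning the largest value together with an associated maximizer. Since the construction of $\kdiag_\nu(k)$ and the evaluation of $\nu_k$ are what Subsection~\ref{bigksenu} and Subsection~\ref{computingnuk} are meant to provide, this step is really just bookkeeping.

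Next I would verify that, for each $j$, the function $f_j$ defined at Eq.~\eqref{fk} is convex. This is the only technical point and it is short: since $Q\succeq 0$, the congruence ${A^j}^\intercal Q A^j$ is also positive semi-definite (for any $y\in\rd$, $y^\intercal {A^j}^\intercal Q A^j y=(A^j y)^\intercal Q(A^j y)\geq 0$), so $x\mapsto x^\intercal {A^j}^\intercal Q A^j x$ is convex, and adding the linear term $q^\intercal A^k x$ preserves convexity. Thus $f_j$ is convex on $\rd$.

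Having convexity of $f_j$ and recalling that $\xin$ is a polytope, I would then invoke Lemma~\ref{lemma2} to conclude that $\nu_j=\max_{x\in\mathcal{E}(\xin)}f_j(x)$, so computing $\nu_j$ amounts to $|\mathcal{E}(\xin)|$ evaluations of a quadratic expression. Multiplying by the number of indices $j$ dictated by Corollary~\ref{final} yields the bound $\max\{k,\kdiag_\nu(k)\}\times |\mathcal{E}(\xin)|$ claimed in the theorem (up to a harmless additive term coming from whether one counts the index $j=0$). The maximizer $\xopt$ is one of the vertices of $\xin$ achieving the largest value, and $\kopt$ is the associated index. There is no real obstacle: the whole content lies in Corollary~\ref{final}, which has already done the analytic work.
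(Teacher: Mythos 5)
Your proposal is correct and follows exactly the route the paper intends: the reduction to finitely many indices via the second statement of Corollary~\ref{final}, the observation that $Q\succeq 0$ makes each $f_j$ convex, and the vertex enumeration of Lemma~\ref{lemma2} to evaluate each $\nu_j$. Your parenthetical about the additive discrepancy (the indices $j=0,\ldots,\max\{k,\kdiag_\nu(k)\}$ number one more than the stated count) is a fair and honest reading of the paper's own bookkeeping.
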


Unfortunately, to use this method to solve a concave quadratic program does not scale well. The resolution time and the memory consumption  blow up when the number of vertices grows. In future work, we will call more tractable methods to solve high dimensional problems.  
\subsection{About $k\in\pos_\nu$}
\label{posnu}
The computation of $\kdiag_\nu$ needs an element of $\pos_\nu$. So, we have to answer to two questions: is $\pos_\nu$ empty? How to compute $\ks_\nu$ i.e. the smallest element of $\pos_\nu$?
 
To decide whether $\pos_\nu\neq \emptyset$ is a difficult problem. This is the same to ask whether $-\nu_k>0$ for all $k\in\nn$. For a linear recurrence, this problem is called the positivity problem~\cite{ouaknine2014positivity}. Note that for linear recurrences, this problem is still open. In consequence, in practice, we fix a maximal number of visited ranks denoted by $N$. If we have not found a positive iterate $\nu_k$ before $N$, then we abort the computation and return the status "failed". 

We also address another simpler question: $0\in\pos_\nu$? This is the only case which does not involve the powers of the matrix $A$. So, those simple situations take into account $\xin$, $Q$ and $q$. First, given $\xin$, $Q$ and $q$, we can check whether $0\in\pos_\nu$ using the same arguments as in Subsection~\ref{computingnuk}. Second, we want to identify simple situations ensuring that $0\in\pos_\nu$.

\begin{proposition}
For all $k\geq 0$, the decision problem $\nu_k>0$ can be decided using convex quadratic programming when $Q\preceq 0$ and in $\mathcal{E}(\xin)$ evaluations of quadratic expressions when $Q\succeq 0$. 
\end{proposition}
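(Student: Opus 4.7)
The plan is to reduce the decision problem $\nu_k > 0$ to the computation (or at least a sign check) of $\nu_k = \sup_{x\in\xin} f_k(x)$, and then apply the two tractability arguments already established in Subsection~\ref{computingnuk}. Since $\nu_k > 0$ if and only if $\sup_{x\in\xin} f_k(x) > 0$, it is enough to exhibit, in each of the two signatures of $Q$, a finite procedure producing $\nu_k$ (or any quantity with the same sign) whose cost matches the statement.

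For the concave case $Q \preceq 0$, I would first observe that $A^{k\intercal} Q A^k \preceq 0$: indeed, for every $y \in \rd$, $y^\intercal A^{k\intercal} Q A^k y = (A^k y)^\intercal Q (A^k y) \leq 0$. Hence $f_k$ is a concave quadratic function, and $\nu_k = \max_{x \in \xin} f_k(x)$ is the optimal value of a concave quadratic maximization problem over a polytope, i.e.\ a convex quadratic program (equivalently, minimize $-f_k$ which is convex). Any of the convex QP solvers cited in Subsection~\ref{computingnuk} returns $\nu_k$ in finite time, so the sign test $\nu_k > 0$ is decided by a single convex QP.

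For the convex case $Q \succeq 0$, the same computation shows $A^{k\intercal} Q A^k \succeq 0$, so $f_k$ is a convex function on $\rd$. By Lemma~\ref{lemma2} applied to the polytope $\xin$,
\[
\nu_k = \max_{x \in \xin} f_k(x) = \max_{x \in \mathcal{E}(\xin)} f_k(x) \enspace .
\]
Hence $\nu_k$ is obtained by evaluating $f_k$ at each of the $|\mathcal{E}(\xin)|$ vertices of $\xin$ and keeping the largest value. The sign of $\nu_k$ is then read off in at most $|\mathcal{E}(\xin)|$ evaluations of the quadratic expression $f_k$, as claimed.

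No genuine obstacle arises: both arguments are direct consequences of the structural remarks already made in Subsection~\ref{computingnuk} (convex QP tractability for concave objectives, and vertex reduction for convex objectives via Lemma~\ref{lemma2}). The only point worth stating explicitly is that the semi-definiteness of $Q$ is inherited by $A^{k\intercal} Q A^k$ via the congruence transformation $y \mapsto A^k y$, which is what makes $f_k$ fall in the corresponding tractable class.
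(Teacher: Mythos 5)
Your proposal is correct and follows exactly the route the paper intends: the paper states this proposition without an explicit proof, relying on the discussion in Subsection~\ref{computingnuk} (convex QP solvers for the concave case, Lemma~\ref{lemma2} and vertex enumeration for the convex case), which is precisely what you spell out. Your explicit remark that semi-definiteness of $Q$ is inherited by ${A^k}^\intercal Q A^k$ under the congruence $y\mapsto A^k y$ is the one detail the paper leaves implicit, and it is stated correctly.
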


\begin{proposition}
\label{naiveresult}
If one of the following statements hold:
\begin{itemize}
    \item If $q=0$ and $Q\succ 0$;
    \item If $q=0$, $Q\succeq 0$, $det(Q)=0$ and $\inte(\xin)\neq \emptyset$;
    \item If $q\neq 0$, $Q\succeq 0$ and $0\in \inte(\xin)$.
\end{itemize}

Then $\ks_\nu=0$.
\end{proposition}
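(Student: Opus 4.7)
The plan is to reduce the statement $\ks_\nu=0$ to a one-shot feasibility question on $\xin$. Since $A^0=\Idd$, the definition of $\nu_k$ yields $\nu_0=\sup_{x\in\xin}(x^\intercal Q x+q^\intercal x)$, so $\ks_\nu=0$ is equivalent to $\nu_0>0$, which in turn amounts to exhibiting a single $x\in\xin$ with $x^\intercal Q x+q^\intercal x>0$. I would then handle the three sufficient conditions separately by producing such a point.

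For the first bullet ($q=0$, $Q\succ 0$), I would invoke the standing assumption $\xin\neq\{0\}$ recalled at the start of Section~\ref{mainresults}, pick any $x\in\xin\setminus\{0\}$, and use positive definiteness to get $x^\intercal Q x>0$. For the third bullet ($q\neq 0$, $Q\succeq 0$, $0\in\inte(\xin)$), I would take $x=\epsilon q$ with $\epsilon>0$ small enough that $\epsilon q\in\xin$ (possible since $0$ is interior and $q\neq 0$). The objective evaluates to $\epsilon^2 q^\intercal Q q+\epsilon\norm{q}_2^2$, and $q^\intercal Q q\geq 0$ by $Q\succeq 0$, leaving $\nu_0\geq \epsilon\norm{q}_2^2>0$.

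The second bullet ($q=0$, $Q\succeq 0$, $\det(Q)=0$, $\inte(\xin)\neq\emptyset$) is slightly more delicate. Assumption~\ref{maxeigq} combined with $Q\succeq 0$ forces $\lmax{Q}>0$, hence $Q\neq 0$ and $\ker(Q)$ is a proper linear subspace of $\rd$. Since a nonempty open subset of $\rd$ contains a ball, it has full affine dimension $d$ and therefore cannot be contained in any proper subspace; thus there is $x\in\inte(\xin)\subseteq\xin$ with $Qx\neq 0$. I would finish with the standard identity $x^\intercal Q x=\norm{Lx}_2^2$ obtained from a factorisation $Q=L^\intercal L$ valid for positive semi-definite $Q$: if $x^\intercal Q x=0$ then $Lx=0$, so $Qx=L^\intercal Lx=0$, contradicting the choice of $x$; hence $x^\intercal Q x>0$.

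I do not expect a genuine obstacle: the proposition packages three elementary sufficient conditions for $\nu_0>0$. The only mildly subtle step is the second bullet, where the dimension argument (open set vs. proper subspace) has to be paired with the semi-definite identity $x^\intercal Q x=\norm{Lx}_2^2$ to promote $Qx\neq 0$ into $x^\intercal Q x>0$; once these two ingredients are assembled, the other two bullets are immediate.
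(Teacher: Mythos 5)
Your proposal is correct, and for the first and third bullets it coincides with the paper's proof: the paper likewise picks any nonzero point of $\xin$ when $Q\succ 0$, and when $q\neq 0$ it takes the point $\gamma q\norm{q}_{\infty}^{-1}$ inside a small box $[-\gamma,\gamma]^d\subset\xin$, which is exactly your $x=\epsilon q$ up to the choice of scaling. The only genuine divergence is in the second bullet. The paper argues by contradiction with a symmetric perturbation: it assumes $x^\intercal Qx$ vanishes identically on $\xin$, takes an interior point $z$ and a direction $y$ with $y^\intercal Qy>0$, and expands $(z\pm\epsilon y/\norm{y}_\infty)^\intercal Q(z\pm\epsilon y/\norm{y}_\infty)=0$ for both signs to force $y^\intercal Qy=0$. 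You instead use the equivalence $x^\intercal Qx=0\iff Qx=0$ for positive semi-definite $Q$ (via $Q=L^\intercal L$) together with the observation that the nonempty open set $\inte(\xin)$ cannot be contained in the proper subspace $\ker(Q)$. Both arguments are elementary and both silently lean on Assumption~\ref{maxeigq} to guarantee $Q\neq 0$ (the hypotheses of the second bullet alone would not exclude $Q=0$, for which the conclusion fails); your version makes that dependence explicit, is slightly shorter, and avoids the sign-juggling in the paper's expansion, at the cost of invoking the factorisation $Q=L^\intercal L$. Nothing is missing from your argument.
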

\begin{proof}
First assume that $q=0$ and $Q\succ 0$. It follows that for all non-zero $x^\intercal Q x>0$ and $\nu_0>0$ if $\xin$ is not reduced to the singleton $\{0\}$ which has been supposed earlier. 

Now assume that $q=0$, $Q\succeq 0$, $det(Q)=0$ and $\inte(\xin)\neq \emptyset$. Therefore there exists $y\in\rd$ such that $y^\intercal Q y>0$. Now suppose that $x^\intercal Q x$ is null on $\xin$. Let $z$ in the interior of $\xin$, then there exists $\epsilon>0$ such that $z\pm \epsilon y/\norm{y}_{\infty}$ belong to $\xin$. Then
$(z+\epsilon y/\norm{y}_{\infty})^\intercal Q (z+\epsilon y/\norm{y}_{\infty})=z^\intercal Q z+(\epsilon/\norm{y}_{\infty})^2 y^\intercal Q y\pm 2(\epsilon/\norm{y}_{\infty}) z^\intercal Q y=
(\epsilon/\norm{y}_{\infty})^2 y^\intercal Q y\pm 2(\epsilon/\norm{y}_{\infty}) z^\intercal Q y=
0$ by assumption. This leads to $y^\intercal Q y=\pm 2(\norm{y}_{\infty}/\epsilon) z^\intercal Q y$ which contradicts the strict positivity of $y^\intercal Q y$.

Finally, suppose that $q\neq 0$, $Q\succeq 0$ and $0\in \inte(\xin)$. Thus there exists $\gamma>0$ such that $[-\gamma,\gamma]^d\subset \xin$. Let $x=\gamma q
\norm{q}_{\infty}^{-1}$. Then $q^\intercal x=\gamma\norm{q}_2^2\norm{q}_{\infty}^{-1} >0$ and $x\in [-\gamma,\gamma]^d$. From $Q\succeq 0$, $x^\intercal Q x+q^\intercal x>0$. We thus have $\sup_{y\in \xin}  y^\intercal Q y+q^\intercal y>0$ and $\ks=0$.

\end{proof}
We end the section with the following theorem.

\begin{theorem}
Suppose that Assumptions~\ref{assum1} and~\ref{diagonal} hold. We also assume that $Q\succeq 0$ and $Q\neq 0$. Moreover, if one these statements hold:
\begin{enumerate}
\item $q=0$ and $Q\succ 0$; 
\item $q=0$ and $\inte{\xin}\neq \emptyset$;
\item $q\neq 0$ and $0\in\inte{\xin}$
\end{enumerate}

Then Problem~\eqref{optpb} can be computed in finite time i.e.
\[
\sup_{k\in\nn}\nu_k=\max_{0\leq k\leq \kdiag_\nu(0)} \max_{x\in\mathcal{E}(\xin)} x^\intercal {A^k}^\intercal Q A^k x+q^\intercal A^k x
\]
\end{theorem}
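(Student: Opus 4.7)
The plan is to combine three earlier results: Proposition~\ref{naiveresult} (to guarantee $\ks_\nu=0$, hence that $\pos_\nu$ is nonempty and $0\in\pos_\nu$), Corollary~\ref{final} (to reduce the infinite supremum to a maximum over the finite range $0,\ldots,\max\{0,\kdiag_\nu(0)\}=\kdiag_\nu(0)$), and Lemma~\ref{lemma2} (to replace each inner supremum over $\xin$ by a maximum over the vertex set $\mathcal{E}(\xin)$).

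First I would check that the standing hypotheses of Section~\ref{mainresults} are in force. Assumptions~\ref{assum1} and~\ref{diagonal} are given. Because $Q\succeq 0$ and $Q\neq 0$, we have $\lmax{Q}>0$, so Assumption~\ref{maxeigq} holds and the integer $\kdiag_\nu(0)$ is well defined via Theorem~\ref{mainth} as soon as $0\in\pos_\nu$. Then I would show that each of the three listed sub-cases of the theorem is covered by Proposition~\ref{naiveresult}. Cases (1) and (3) are literally items 1 and 3 of that proposition. Case (2), $q=0$ with $\inte(\xin)\neq\emptyset$, splits: if $\det(Q)\neq 0$ then $Q\succ 0$ and we are back to item 1; if $\det(Q)=0$ then, together with $Q\succeq 0$, $Q\neq 0$ and $\inte(\xin)\neq\emptyset$, we are in item 2 of Proposition~\ref{naiveresult}. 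In all cases, $\ks_\nu=0$, so in particular $0\in\pos_\nu$ and $\nu_0>0$.

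Next I would invoke Corollary~\ref{final} with the choice $k=0\in\pos_\nu$: the second statement gives
\[
\sup_{k\in\nn}\nu_k \;=\; S_\nu^{0,\infty} \;=\; S_\nu^{0,\max\{0,\kdiag_\nu(0)\}} \;=\; \max_{0\leq k\leq \kdiag_\nu(0)} \nu_k,
\]
where the last equality uses the first item of Theorem~\ref{mainth}, which ensures $\kdiag_\nu(0)\geq 1$. Finally, for each fixed $k\in\{0,\ldots,\kdiag_\nu(0)\}$, the function $f_k(x)=x^\intercal {A^k}^\intercal Q A^k x+q^\intercal A^k x$ of Eq.~\eqref{fk} is convex on $\rd$ because $Q\succeq 0$ implies ${A^k}^\intercal Q A^k\succeq 0$; applying Lemma~\ref{lemma2} with $C=\xin$ (which is a polytope and hence has the finite vertex set $\mathcal{E}(\xin)$) gives
\[
\nu_k \;=\; \max_{x\in\xin} f_k(x) \;=\; \max_{x\in\mathcal{E}(\xin)} f_k(x).
\]
Substituting this into the previous display yields the stated formula, and the whole right-hand side is evaluated in finitely many arithmetic operations.

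The only non-routine step is the dispatch in case (2): one must observe that $Q\neq 0$ together with $Q\succeq 0$ forces $\lmax{Q}>0$ and that, when $\det(Q)=0$, having $\inte(\xin)\neq\emptyset$ is exactly what Proposition~\ref{naiveresult} needs to exhibit a strict increase in $x^\intercal Q x$ and therefore a strictly positive value of $\nu_0$. Once $\ks_\nu=0$ is secured, the rest is a direct assembly of Corollary~\ref{final} and Lemma~\ref{lemma2}.
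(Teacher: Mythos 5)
Your proposal is correct and is essentially the assembly the paper intends (the paper states this theorem without an explicit proof, as a direct consequence of Proposition~\ref{naiveresult}, Corollary~\ref{final} with $k=0$, and Lemma~\ref{lemma2}). Your dispatch of case (2) on whether $\det(Q)=0$, and your observation that $Q\succeq 0$ with $Q\neq 0$ secures Assumption~\ref{maxeigq}, correctly fill in the only details the paper leaves implicit.
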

\subsection{Algorithm to solve Problem~\eqref{optpb}}
\label{algo}
We provide here an algorithm to solve Problem~\eqref{optpb} when the objective function is strictly concave or convex i.e. when $Q\prec 0$ or $Q\succeq 0$. Recall that Assumption~\ref{maxeigq} forces $Q$ to be negative definite. 

If $\xin=\{0\}$ or ($Q\prec 0$ and $q=0$) then we know that $S_\nu^{0,\infty}=0$. In consequence, in addition to Assumptions~\ref{assum1}--\ref{maxeigq}, we make the following assumptions on the inputs of the algorithm:
\begin{itemize}
\item If $Q\prec 0$ then $q\neq 0$;
\item $\xin\neq \{0\}$.
\end{itemize}

The only difference between the treatment of the strictly concave and the convex case is the resolution of each $\sup_{x\in\xin} f_k(x)$. The resolution of this optimization problem appears as the call of the oracle SolverQP. Its inputs are the objective function and the polytope defining the constraints (here $\xin$). Its outputs are the optimal value and a maximizer. When $Q\prec 0$, SolverQP is just a solver for convex quadratic programming. When $Q\succeq 0$ we use Lemma~\ref{lemma2} and explore all vertices of $\xin$. Then in this case, the maximizer returned is a vertex of $\xin$. In future work, we should consider more scalable approaches such as~\cite{konno1976maximization,Floudas1995,Tuy2016}. 
 



\begin{algorithm}[h!]
\DontPrintSemicolon

\Input{The objective function defined from $Q$ and $q$, the system defined from $A$ and $\xin$ and an integer $N$ to stop the search of a positive term.} 
\Output{A vector $(\nuopt,\xopt,\kopt)$ where $\nuopt=S_\nu^{0,\infty}$, $f_{\kopt}(\xopt)=S_\nu^{0,\infty}$ and $(\xopt,\kopt)\in\xin\times \nn$ or a status "failed" if $\nu_k\leq 0$ for all $k=0,\ldots,N$.}
\Begin{
Compute $\mu((UU^*)^{-1})$ and $\vdiag$\;
$(\nu_0,x_0)$=SolveQP($f_0$,$\xin$)\;
\If{$\nu_0\geq \left(\sqrt{|\lmax{U^*QU}|\mus{(UU^*)^{-1}}}+\vdiag\right)^2-\vdiag^2$}
{Return $(\nu_0,x_0,0)$} 
\Else
{
$k=0$\;
\While{$k<N$ and $\nu_k\leq 0$}
{
$k=k+1$\;
$(\nu_k,x_k)$=SolveQP($f_k$,$\xin$)\;
}

\If{$k=N$ and $\nu_k\leq 0$}{
Return "failed"
}
\Else{
$K=\kdiag_\nu(k)$\;
$\nuopt=\nu_k$; $\xopt=x_k$; $\kopt=k$\;
\While{$k<K$}{
$k=k+1$\;
$(\nu_k,x_k)$=SolveQP($f_k$,$\xin$)\;
\If{$\nuopt<\nu_k$}{
$\nuopt=\nu_k; \xopt=x_k; \kopt=k$\;
$K=\kdiag_\nu(k)$\;
}
}
Return $(\nuopt,\xopt,\kopt)$\;
}
}
}
\caption{Resolution of Pb.\eqref{optpb} for a convex or a strictly concave quadratic objective function}
\label{algoAffQP}
\end{algorithm}

\begin{proposition}
\label{kdiagit}
If $\ks_\nu\leq N$, then the sequence generated at Line 21 of Algorithm~\ref{algoAffQP} $\left(\kdiag_\nu(k)\right)_{k\in \Gamma}$ is decreasing where $\Gamma$ denotes the set of integers where $\nu_k>\nu_j$ for all $j<k$.
\end{proposition}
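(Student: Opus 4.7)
The plan is to reduce the claim to a single application of Proposition~\ref{inter}, statement 1. Under the hypothesis $\ks_\nu\le N$, the search loop for a positive iterate necessarily exits by finding such a term (not by hitting $k=N$), so the algorithm enters the branch that initialises $\nuopt=\nu_{\ks_\nu}>0$ and begins the main loop. An index $k$ belongs to $\Gamma$ precisely when the update inside that loop fires, i.e.\ when $\nu_k>\nu_j$ for every $j<k$. In particular $\nu_k>\nu_{\ks_\nu}>0$ (with equality when $k=\ks_\nu$ itself), so every element of $\Gamma$ lies in $\pos_\nu$. This containment is the only non-trivial fact required in order to be allowed to invoke Proposition~\ref{inter}.1 on elements of $\Gamma$.

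Next, I take two arbitrary indices $k_1<k_2$ in $\Gamma$. The defining property of $\Gamma$ applied to $k_2$ yields $\nu_{k_2}>\nu_j$ for every $j<k_2$, and in particular $\nu_{k_2}>\nu_{k_1}$, so that $\nu_{k_1}\le\nu_{k_2}$. Since both $k_1$ and $k_2$ are in $\pos_\nu$, Proposition~\ref{inter}.1 (with the roles $j:=k_2$ and $k:=k_1$) immediately gives $\kdiag_\nu(k_2)\le\kdiag_\nu(k_1)$. As this holds for every pair $k_1<k_2$ in $\Gamma$, the sequence $\bigl(\kdiag_\nu(k)\bigr)_{k\in\Gamma}$ is decreasing, which is the desired conclusion.

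There is essentially no obstacle here beyond the bookkeeping just described: verifying $\Gamma\subseteq\pos_\nu$ (so the hypotheses of Prop.~\ref{inter}.1 are satisfied) and translating the strict inequality coming from the definition of $\Gamma$ into the non-strict inequality required by that proposition. One subtlety worth flagging in the writeup is that, because $\kdiag_\nu$ involves a floor, strict inequalities $\nu_{k_1}<\nu_{k_2}$ do not in general lift to strict inequalities in $\kdiag_\nu$, so ``decreasing'' must be understood in the weak (non-increasing) sense.
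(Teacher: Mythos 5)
Your argument is correct and follows essentially the same route as the paper's own proof: both reduce the claim to the first statement of Proposition~\ref{inter}, after observing that every index at which the update at Line 21 fires lies in $\pos_\nu$ (since $\nu_k\geq\nu_{\ks_\nu}>0$ there), so that for $j<k$ in $\Gamma$ the inequality $\nu_j<\nu_k$ yields $\kdiag_\nu(k)\leq\kdiag_\nu(j)$. Your remark that the floor in Eq.~\eqref{kdiagform} forces ``decreasing'' to be read in the weak sense is a fair clarification of the paper's wording.
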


\begin{proof}
If $\ks_\nu\leq N$ and the value $\nuopt$ is modified at the step $k$, then $k\in\pos_\nu$. Let $j,k\in \Gamma$ such that $j<k$. By definition of $\Gamma$, $\nu_j<\nu_k$. From Prop.~\ref{inter}, we have $\kdiag_\nu(k)\leq \kdiag_\nu(j)$. As $\bigkse_\nu$ is the greatest element of $\Gamma$, the value $\kdiag_\nu(k)$ is minimal when $k=\bigkse_\nu$. When this value is reached $\kdiag_\nu(k)$ cannot be updated as $\nuopt$ cannot be modified.
\end{proof}

Prop.~\ref{kdiagit} proves that we reduce the number of iterations by recomputing $\kdiag_\nu(k)$ at Line 21 of Algorithm~\ref{algoAffQP}.

%

\begin{theorem}[Algorithm~\ref{algoAffQP} is correct]
Let $N\in\nn$ be fixed. Suppose that $\ks_\nu\leq N$ then Algorithm~\ref{algoAffQP} returns the optimal value, a couple of maximizers $(\kopt,\xopt)\in\nn\times \xin$ for Problem~\eqref{optpb}. 
\end{theorem}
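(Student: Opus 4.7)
My plan is to verify correctness branch by branch. First, under the hypothesis $\ks_\nu \leq N$, the ``failed'' return is unreachable: the first while loop exits as soon as $\nu_k > 0$, which first occurs at $k = \ks_\nu \leq N$, so that loop terminates with some $k_0 \in \pos_\nu$ (in fact $k_0 = \ks_\nu$) and the algorithm proceeds to initialise the main loop. For the early exit test on $\nu_0$, if it succeeds then Corollary~\ref{corollary1} gives $\nu_0 = S_\nu^{0,\infty}$, and since $x_0$ is returned by SolveQP as an argmax of $f_0$ over $\xin$, the triple $(\nu_0, x_0, 0)$ is correct.

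Next, I would show that the main loop terminates. At entry $K = \kdiag_\nu(k_0) \geq 1$ by Theorem~\ref{mainth}(1), and $k$ is strictly incremented each pass. Whenever the conditional update fires ($\nu_k > \nuopt$), denote by $\kopt_{\mathrm{prev}}$ the previously recorded best rank; since $\nu_{\kopt_{\mathrm{prev}}} < \nu_k$, Proposition~\ref{inter}(1) yields $\kdiag_\nu(k) \leq \kdiag_\nu(\kopt_{\mathrm{prev}})$, so $K$ is non-increasing. Being bounded below by $1$ while $k$ strictly increases, the loop condition $k < K$ must eventually fail.

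To establish optimality of the returned triple $(\nuopt, \xopt, \kopt)$, let $k_{\mathrm{end}}$ be the terminal value of $k$, so that $k_{\mathrm{end}} \geq K = \kdiag_\nu(\kopt)$. I would partition $\nn$ into three regions. For $j < \ks_\nu$, $\nu_j \leq 0 < \nuopt$. For $\ks_\nu \leq j \leq k_{\mathrm{end}}$, $\nu_j$ was explicitly computed by SolveQP, and by construction either $j = \kopt$ (giving $\nu_j = \nuopt$) or the update branch was skipped (giving $\nu_j \leq \nuopt$). For $j > k_{\mathrm{end}} \geq \kdiag_\nu(\kopt)$, Theorem~\ref{mainth}(2) applied to the positive rank $\kopt$ yields $\nu_j \leq \nu_{\kopt} = \nuopt$. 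Combining the three cases gives $\nuopt = \sup_{k \in \nn} \nu_k = S_\nu^{0,\infty}$, and since $\xopt \in \xin$ with $f_{\kopt}(\xopt) = \nuopt$, the pair $(\kopt, \xopt) \in \nn \times \xin$ is a valid maximizer.

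The main point requiring care is that the dynamic resetting of $K$ inside the main loop does not cause the algorithm to skip past a rank with a better $\nu$-value. This is precisely what the two structural facts above deliver: the monotonicity of $K$ from Proposition~\ref{inter}(1) forces termination, while the tail bound of Theorem~\ref{mainth}(2) certifies that no rank beyond the final $K$ can exceed $\nuopt$.
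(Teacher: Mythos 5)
Your proof is correct. The overall skeleton matches the paper's (the failed status is unreachable under $\ks_\nu\leq N$, Corollary~\ref{corollary1} handles the early exit at Line 5, and the rest is an analysis of the main loop), but your optimality argument is routed differently. The paper works at the level of the smallest maximizer: at termination it establishes $\bigkse_\nu\leq k$ by a case analysis on whether the rank $j$ with $K=\kdiag_\nu(j)$ belongs to $\agmx(\nu)$, invoking the second statement of Proposition~\ref{inter}, and then concludes that the optimal value has been found, leaving the running-maximum bookkeeping implicit. You bypass $\bigkse_\nu$ and $\agmx(\nu)$ entirely: using the invariant $K=\kdiag_\nu(\kopt)$, you bound $\nu_j$ by $\nuopt$ on each of the three regions $j<\ks_\nu$, $\ks_\nu\leq j\leq k_{\rm end}$ and $j>k_{\rm end}$, applying the tail bound of the second statement of Theorem~\ref{mainth} directly to the positive rank $\kopt$. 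This is slightly more self-contained (the second statement of Proposition~\ref{inter} is itself derived from Theorem~\ref{mainth}) and it makes explicit two points the paper glosses over: that the main loop terminates because $K$ is non-increasing by the first statement of Proposition~\ref{inter} while $k$ strictly increases, and that the returned $\nuopt$ genuinely dominates every computed term. The paper's route, in exchange, yields the extra structural information that $\bigkse_\nu\leq k_{\rm end}$, i.e.\ that the smallest maximizer rank itself has been visited. Both arguments are sound.
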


\begin{proof}
If $\ks_\nu\leq N$, then Algorithm~\ref{algoAffQP} cannot return a failed status. Then, Algorithm~\ref{algoAffQP} stops either at Line 5 or at Line 16. If the stop is at Line 5, it means that $\nu_0\geq \left(\sqrt{|\lmax{U^*QU}|\mus{(UU^*)^{-1}}}+\vdiag\right)^2-\vdiag^2$ then the conclusion follows from Corollary~\ref{corollary1}. 
In the other case, the loop iteration starting at Line 16 terminates as $K\leq k$. Either, $\kdiag_\nu(k)\leq k$ or $\kdiag_\nu(j)=k$ for some $j<k$. If $\kdiag_\nu(k)\leq k$, from the second statement of Prop.~\ref{inter}, we get $\bigkse_\nu \leq k$. 
If $\kdiag_\nu(j)=k$ for some $j<k$. We must have $j\leq \kdiag_\nu(j)$. If $j\in\agmx(\nu)$, we have $\bigkse_\nu\leq j\leq \kdiag_\nu(j)=k$. If $j\notin\agmx(\nu)$, we have $\bigkse_\nu\leq \kdiag_\nu(j)=k$. In all situations, we have $\bigkse_\nu\leq k$ and thus the optimal value has been found.
 

\end{proof}

\subsection{From linear systems to affine ones}
\label{affine}
We come back to the recurrence formulation of Equation~\eqref{stateeq}: the case where the system is purely affine ($b\neq 0$). We adopt the basic approach which consists in using an auxiliary linear discrete-time system. We are interested in solving Problem~\eqref{optpb} where $b\neq 0$. Assumption~\ref{assum1} still holds and it implies that $\Idd-A$ is invertible. It is well-known that:
\[
\forall\, k\in\nn,\ y_k=x_k-\btilde,\text{ where } \btilde=(\Idd-A)^{-1} b \implies
\forall\, k\in\nn,\ y_{k+1}=A y_k \text{ and }
x_k=A^k y_0+\btilde
\]
This latter expression leads to a new formulation of Problem~\eqref{optpb}: 
\begin{equation}
\label{affineequation}
    \sup_{k\in\nn} \sup_{y_0\in\xin-\btilde} y_0^\intercal (A^k)^\intercal Q  A^k y_0+(2\btilde^\intercal Q+q^\intercal) A^k y_0 +\btilde^\intercal Q \btilde+q^\intercal \btilde
\end{equation}
We conclude that we can use the results developed in Subsections~\ref{bigksenu}--\ref{algo} where the matrix $Q$ is unchanged and the vector $q$ becomes $2Q\btilde+q$. The polytope of initial conditions also changes since we have to consider now $\xin-\btilde$. Note that $\mathcal{E}(\xin-\btilde)=\mathcal{E}(\xin)-\btilde$.

\section{Implementation and Experiments}
\label{experiments}

%

\subsection{Example}
We illustrate our techniques on one academic example. The example deals with a linear system and convex objective functions.
 
\comment{
\subsubsection{With a linear system}
}
We consider the discretisation of an harmonic oscillator $\ddot{x}+\dot{x}+x=0$ by an explicit Euler scheme. The discretization step is set to 0.01. Introducing the position variable, $x$ and the speed variable $v$. We assume that the initial conditions can be taken into the set $[-1,1]^2$. The Euler scheme becomes a linear discrete-time system in dimension two defined as follows: 
\begin{equation}
\label{harmonic}
\begin{pmatrix}
x_{k+1}\\
v_{k+1}
\end{pmatrix}
=\begin{pmatrix}
1 & 0.01\\
-0.01 & 0.99
\end{pmatrix}
\begin{pmatrix}
x_{k}\\
v_{k}
\end{pmatrix},\ (x_0,v_0)\in [-1,1]^2
\end{equation}

\subsubsection*{Homogeneous convex objective functions}

For this linear system, we are interested in computing:
\begin{itemize}
\item the maximal value of the Euclidean norm of the state-variable $\norm{(x_k,v_k)}_2^2$ and thus $Q=\begin{pmatrix}
1 & 0\\ 
0 & 1
\end{pmatrix}$ ;
\item the square of the position variable $x_{k}^2$ and thus $Q=\begin{pmatrix}
1 & 0\\ 
0 & 0
\end{pmatrix}$;
\item the speed variable $v_{k}^2$ and thus $Q=\begin{pmatrix}
0 & 0\\ 
0 & 1
\end{pmatrix}$.
\end{itemize}
The associated quadratic objective functions are homogeneous and convex.
\comment{
U=\dfrac{1}{2}\begin{pmatrix}
\sqrt{2} & \sqrt{2}\\
\dfrac{i\sqrt{6}-\sqrt{2}}{2} & -\dfrac{i\sqrt{6}+\sqrt{2}}{2}
\end{pmatrix}, 
\text{ and }
U^{-1}=\begin{pmatrix}
\dfrac{\sqrt{18}-i\sqrt{6}}{6} & -\dfrac{i\sqrt{6}}{3}\\
\dfrac{\sqrt{18}+i\sqrt{6}}{6} & \dfrac{i\sqrt{6}}{3}
\end{pmatrix}
}
The matrix $A$ is diagonalizable and we can take:
\[
U=\begin{pmatrix}
1 & 1\\
\dfrac{i\sqrt{3}-1}{2} & -\dfrac{i\sqrt{3}+1}{2}
\end{pmatrix} \text{ and }
D=\begin{pmatrix} 
\dfrac{199+i\sqrt{3}}{200} & 0\\ 0 & 
\dfrac{199-i\sqrt{3}}{200}
\end{pmatrix}
\]
We conclude that $\rho(A)=\dfrac{\sqrt{9901}}{100}<1$.
\comment{
$U^\star=\dfrac{1}{4} 
\begin{pmatrix}
4&1+i\sqrt{3} \\
1+i\sqrt{3} & 4
\end{pmatrix}
$.
}

\noindent To compute the maximal value of any convex/concave quadratic objective functions on the reachable values of system~\eqref{harmonic}, we need the following information: 
$U^\star=\begin{psmallmatrix}
1 & & -\frac{i\sqrt{3}+1}{2}\\
1 & &\frac{i\sqrt{3}-1}{2}
\end{psmallmatrix}
$ and $\mu((UU^\star)^{-1})=\displaystyle{\sup_{(x,y)\in[-1,1]^2}} 3^{-1}(2x^2+2y^2+2xy)=2$.

We need supplementary values depending on the objective quadratic function. We give those values for each three problems presented earlier:
\begin{itemize}
\item For $Q=\Idd_2$, we have $\lmax{U^\star Q U}=\lmax{U^\star U}=3$;
and $\nu_0=\sup_{(x,y)\in [-1,1]^2} x^2+y^2=2$. As $\sqrt{|\lmax{U^\star Q U}|\mu((UU^\star)^{-1})}=\sqrt{6}$, the hypothesis of Corollary~\ref{corollary1} i.e. $\nu_0\geq \sqrt{|\lmax{U^\star Q U}|\mu((UU^\star)^{-1})}$ does not hold. As $\nu_0>0$, we have $\ks_\nu=0$ and : 
\[
\kdiag_\nu(0)=\left\lfloor\dfrac{\ln\left(\sqrt{\nu_0/|\lmax{U^\star Q U}|\mu((UU^\star)^{-1})}\right)}{\ln(\rho(A))}\right\rfloor+1=\left\lfloor\dfrac{\ln(\sqrt{1/3})}{\ln(\rho(A))}\right\rfloor+1=111
\]
To compute the optimal value and a maximizer, we compute $\nu_k$ for all $k=0,\ldots,\kdiag_\nu(0)$. By doing so, we find that the optimal value $\nuopt$ is equal to 2 reached at $\kopt$ equal to 0 for the vertex $\xyopt$ equal to $(1,1)^\intercal$. Note that in Algorithm~\ref{algoAffQP}, we should update $\kdiag_\nu$. In this example, as the optimal value is found at $k=0$, then $\kdiag_\nu$ does never change. 
  
\item For $Q=\begin{psmallmatrix}
 1 & 0\\
 0 & 0
 \end{psmallmatrix}$: $\lmax{U^\star Q U}=2$, $\nu_0=\sup_{(x,y)\in [-1,1]^2} x^2=1$ and $\sqrt{|\lmax{U^\star Q U}|\mu((UU^\star)^{-1})}=2$. Again the hypothesis of Corollary~\ref{corollary1} does not hold and $\ks_\nu=0$. Finally: 
\[
\kdiag_\nu(0)=\left\lfloor\dfrac{\ln(1/2)}{\ln(\rho(A))}\right\rfloor+1=140
\]
We, then, compute $\nu_k$ for all $k=0,\ldots,\kdiag_\nu(0)$. By doing so, we find $\nuopt\simeq 1.64886$, $\kopt=61$ and $\xyopt=(1,1)^\intercal$. In Algorithm~\ref{algoAffQP}, the integer $\kdiag_\nu$ is modified when the optimal value increases. In particular, since $\nu_1=1.21$, we get
$\kdiag_\nu(1)=121$. Then, we get $\kdiag_\nu(2)=119$,\ldots, $\kdiag_\nu(61)=90$. In consequence, we reduce our first estimate of number of iterations from 140 to 90. 

\item For $Q=\begin{psmallmatrix}
 0 & 0\\
 0 & 1
 \end{psmallmatrix}$: $\lmax{U^\star Q U}=2$, $\nu_0=\sup_{(x,y)\in [-1,1]^2} y^2=1$ and $\sqrt{|\lmax{U^\star Q U}|\mu((UU^\star)^{-1})}=2$. Again the hypothesis of Corollary~\ref{corollary1} does not hold and $\ks_\nu=0$. Finally: 
\[
\kdiag_\nu(0)=\left\lfloor\dfrac{\ln(1/2)}{\ln(\rho(A))}\right\rfloor+1=140
\]
We, then, compute $\nu_k$ for all $k=0,\ldots,\kdiag_\nu(0)$. By doing so, we find $\nuopt=1$, $\kopt=0$ and $\xyopt=(1,1)^\intercal$. As $\kopt=0$, then  $\kdiag_\nu$ never changes in Algorithm~\ref{algoAffQP}.
\end{itemize}

\noindent We remark that for the case where $Q=\begin{psmallmatrix}
 1 & 0\\
 0 & 0
 \end{psmallmatrix}$ and the case $Q=\begin{psmallmatrix}
 0 & 0\\
 0 & 1
 \end{psmallmatrix}$, $\kdiag_\nu(0)$ is the same. Indeed, for $P=\begin{psmallmatrix}
 0 & 1\\
 1 & 0
 \end{psmallmatrix}$, we have $\begin{psmallmatrix}
 0 & 0\\
 0 & 1
 \end{psmallmatrix}=P\begin{psmallmatrix}
 1 & 0\\
 0 & 0
 \end{psmallmatrix}P$. The matrix $P$ is an orthogonal permutation matrix. Thus, the eigenvalues do not change. The value $\nu_0$ is not impacted by this matrix multiplication. Indeed, the coordinates of any initial vector are just permuted and the intervals for the coordinates are the same. The final optimal value is only value affected by this modification. 

\noindent In this example, we note that $\ks_\nu=0$ following Prop.~\ref{naiveresult} as the interior of the initial set is non-empty and the objective function is homogeneous and convex.

\begin{remark}
In the preliminary work using Lyapunov function~\cite{adj2018optimal}, we had coarser integers $\kdiag_\nu$. For $Q=\Idd_2$, we had $\kdiag_\nu(0)=130$, for $Q=\begin{psmallmatrix}  1 & 0\\ 0 & 0
 \end{psmallmatrix}$, $\kdiag_\nu(0)=188$ and for $Q=\begin{psmallmatrix}  0 & 0\\ 0 & 1
 \end{psmallmatrix}$, $\kdiag_\nu(0)=221$.
\end{remark}
\subsubsection*{Non-Homogeneous convex objective function}
Let us consider the same linear system depicted at Eq~\eqref{harmonic}. 
We consider another optimization problem over the reachable values of the system. We are interested in the computation of the optimal value :
\[
\sup_{k\in\nn} x_k^\intercal Q x_k+q^\intercal x_k
\text{ where }Q=\begin{pmatrix} 1 & -1/2 \\ -1/2 & 1/4\end{pmatrix}\text{ and }q^\intercal=(-1,1/2)\enspace .
\]

We use the same spectral decomposition as before and thus, we still have $U^\star=\begin{psmallmatrix}
1 & & -\frac{i\sqrt{3}+1}{2}\\
1 & &\frac{i\sqrt{3}-1}{2}
\end{psmallmatrix}
$ and $\mu((UU^\star)^{-1})=\displaystyle{\sup_{(x,y)\in[-1,1]^2}} 3^{-1}(2x^2+2y^2+2xy)=2$. Since the objective function is not homogeneous, we have $\vdiag\neq 0$. Actually, we have $\norm{U^\star q}=\sqrt{7/2}$ and $|\lmax{U^\star Q U}|=7/2$. Hence, $\vdiag=1/2$.

We have $\nu_0=\sup_{(x,y)\in [-1,1]^2} x^2-xy+0.25y^2-x+0.5 y =15/4$ and $(\sqrt{|\lmax{U^\star Q U}|\mu((UU^\star)^{-1})}+\vdiag)^2-\vdiag^2=7+\sqrt{7}>\nu_0$ (Corollary~\ref{corollary1} does not hold). We compute 
\[
\kdiag_\nu(0)=\left\lfloor\dfrac{\ln\left((\sqrt{\nu_0+\vdiag^2}-\vdiag)/\sqrt{|\lmax{U^\star Q U}|\mu((UU^\star)^{-1})}\right)}{\ln(\rho(A))}\right\rfloor+1=\left\lfloor\dfrac{\ln\left(3/(2\sqrt{7})\right)}{\ln(\rho(A))}\right\rfloor+1=115
\]
By computing $\nu_k$ for $k=0,\ldots,115$, we found as optimal value $\nuopt=3.75$ at $\kopt=0$ and for $\xopt=(-1,1)^\intercal$. 

The structure of the objective function is particular. The matrix $Q$ is actually equal to $qq^\intercal$. This fact explains the value $\vdiag$. This particular case is motivated by some verification purpose. We found as optimal value $3.75$. It means that we have  
$(x_k-0.5 v_k)^2-x_k+0.5 v_k\leq 3.75$ or $-3/4 \leq x_k-0.5 v_k\leq 3$.
\comment{
\subsubsection{Affine systems}

\paragraph{Homogeneous objective function}

We propose to use the same linear system proposed by Ahmadi and G\"unl\"uk in~\cite{ahmadi2018robust} governed by a rotation transformation except that we add a translation in the geometric transformation. For record, their system is governed by the (quasi) rotation matrix:
\[
A=
 \dfrac{4}{5}\begin{pmatrix}
 \cos(\theta)& sin(\theta)\\
-sin(\theta) & \cos(\theta)
\end{pmatrix},\text{ where }\theta=\dfrac{\pi}{6}
\]
Note that since $(5/4)A$ is a rotation matrix, then for all $x\in\rr^2$,
$(25/16)\norm{Ax}_2^2=\norm{x}_2^2$ and thus $\Idd-A^\intercal \Idd A\succeq 0$ and $\Idd$ is a solution of the discrete Lyapunov equation. 
In practice, our Matlab implementation found $Q$ as optimal solution for Problem~\eqref{eq:kunp} when the objective functions are $F_i$ with $i=1,2,3,4$ (all of them except $F_0(P)=\sup_{x\in \xin} x^\intercal P x$). For the case of optimal solutions of Problem~\eqref{eq:klmaxp}, $Q$ is never returned. In the two cases, $K(t,P)=1$ whatever the objective functions.
  
In our experiment, we add a translation transformation characterized by the vector $(1,-1)^\intercal$. We get the system:
\begin{equation}
\label{affinesys}
\begin{pmatrix}
x_{k+1}\\
y_{k+1}
\end{pmatrix}
= \dfrac{4}{5}\begin{pmatrix}
 \cos(\theta)& sin(\theta)\\
-sin(\theta) & \cos(\theta)
\end{pmatrix}
\begin{pmatrix}
x_{k}\\
y_{k}
\end{pmatrix}+\begin{pmatrix}
1\\
-1
\end{pmatrix},\ (x_0,y_0)\in [-1,2]^2
\end{equation}

We want to prove that for all $k\in\nn$, $x_k^2\leq 16$ and $y_k^2\leq 16$.

Note that Assumption~\ref{assumnewks} holds as the hypothesis of Prop~\ref{ksreduc} is satisfied.
Actually, the maximum of the square of the first coordinate is reached at $k=1$ and is equal to $10.1483$. Our computations show that we have to stop at $k=6$. This proves the property i.e. for all $k\geq 0$, $x_k^2\leq 16$ does hold. 


Actually, the maximum of the square of the first coordinate is reached at $k=4$ and is equal to $21.1427$. Our computations show that we have to stop at $k=11$. This disproves the property i.e. for all $k\geq 0$, $y_k^2\leq 16$ does not hold. 

\begin{table}[h!]
\begin{center}
\begin{tabular}{|c|c|c|c|c|c|c|}
\hline
Objectives & Numbers & $F_0$ & $F_{1}$ & $F_{2}$ & $F_3$ & $F_4$ \\
\hline 
$x^2_k$& $K_{\rm max}$ & 6&6 & 6&9&6 \\
\hline
 & $K_{1}$ & 97& 97& 94&94 &8 \\
 \hline
$y_k^2$ & $K_{\rm max}$ &11 &11 &11 &12 &11 \\
\hline
 & $K_{1}$ &264 & 273&263 & 263&19 \\
 \hline 
$(x_k-0.5 v_k)^2-x_k+0.5 v_k$ & $K_{\rm max}$ & 11&11 &11 &12 &11 \\
\hline
& $K_{1}$ & 27 &1100 &1114 &1114 &18 \\
\hline
\end{tabular}
\end{center}
\caption{The table of integers computed for Affine System~\eqref{affinesys}}
\label{table2}
\end{table}
\subsubsection{Non-Homogeneous objective function}

Taking the same system depicted at Eq~\eqref{affinesys}, we want to prove that $-7\leq 0.5x_k-2y_k\leq 5$ is invariant by the dynamics. From Remark~\ref{remarklinear}, to prove the property is equivalent to prove $(0.5x_k-2 y_k)^2-x_k+4y_k\leq 35$. Then, we have to compute the optimal value :
\[
\sup_{k\in\nn} x_k^\intercal Q x_k+q^\intercal x_k  \text{ where } Q=\begin{pmatrix} 1/4 & -1 \\ -1 & 4\end{pmatrix}\text{ and }q^\intercal=(-1,4)\enspace.
\]

Table~\ref{table2} shows that we should stop at the iteration $k=11$ to ensure that the maximum of $(0.5x_k-2 y_k)^2-x_k+4y_k$ is reached before the step $k=11$. Actually the maximum is reached at iteration $k=4$ and is equal to $73.295$ which indicates that the property does hold since we wanted to prove that
$(x_k-0.5 v_k)^2-x_k+0.5 v_k\leq 35$.
}
\subsection{Implementation and Benchmarks}
We implement Algorithm~\ref{algoAffQP} in Julia 1.4.0~\cite{bezanson2017julia} on a laptop equipped with a Intel(R) Core(TM) i5-6300U @ 2,40GHz processor and 8Gb RAM memory. To solve convex quadratic programs we use the solver IpOpt~\cite{wachter2006implementation}. The linear algebra tools such as eigendecomposition and eigenvalues extraction have been managed by the standard library LinearAlgebra of Julia. 





\subsubsection{Benchmarks protocol}
We generate 100 random benchmarks for the possible combinations of the problem : {linear/affine} systems, {convex/concave} and {homogeneous/non-homogeneous} objective functions. 

The protocol of the benchmarks is as follows. We generate randomly a matrix the spectral radius of which is strictly less than 1. Moreover, we regenerate a new matrix if it is not diagonalizable. If the system is affine, we also generate randomly a vector. To complete the definition of discrete-time system, we need an initial set. In the convex case, we use a vertex representation of the initial set and generate randomly a certain number of vertices. In the concave case, we use a constraints representation of the initial set. In our benchmarks, we restrict ourselves initial sets to be boxes. Our code guarantees the non-empty box. Note that, even for the concave case, we need the vertices of the initial polytopic set. Indeed, we compute the maximum of the convex function $x {UU^\star}^{-1}x$ on $\xin$ using Lemma~\ref{lemma2}.  For the objective function, a symmetric matrix is generated. This matrix can be positive semi-definite or negative definite depending on the test (convex/concave). If we need a non-homogeneous objective quadratic function, we generate randomly a vector. The algorithm returns a status:
\begin{itemize}
    \item Failed, if a positive term for the sequence has not been reached before the maximal number parameter $N$;
    \item Corollary 1, if $\nu_0\geq \left(\sqrt{|\lmax{U^*QU}|\mus{(UU^*)^{-1}}}+\vdiag\right)^2-\vdiag^2$ holds;
    \item $\kdiag_\nu$ if in Algorithm~\ref{algoAffQP}, Line 14 is reached;
\end{itemize}

For each class of problems, we write the type of objective function (Obj. Type). The following abbreviations are used in Tables~\ref{Lineartable} and~\ref{Affinetable}: 
\begin{itemize}
\item CXH for convex and homogeneous;
\item CXnH for convex and non-homogeneous;
\item CAH for concave and non-homogeneous;
\item CAnH for concave and non-homogeneous.
\end{itemize}
Tables~\ref{Lineartable} and~\ref{Affinetable}, we also write the dimension (Dim.) of the system; the number of vertices (Ver. Nb.); the number of occurrences of each status (Status C=Corollary 1;K=$\kdiag_\nu$; F=Failed); the average of resolution time (Avg. Time) in seconds ; the average of memory used (Avg. Mem.) in Megabits (MiB) and the maximal memory size used (Mx. Mem.). We complete the benchmark tables with the average of the ranks of the first positive term of the sequence $(\nu_k)_k$ (Avg. $\ks_{\nu}$) and the maximum of this rank over the 100 generated instances (Mx. $\ks_\nu$). Next, we present the average of the number of iterations (Avg. It. Nb.) made to solve the problem. This number of iterations is actually provided by $\kdiag_\nu(\bigkse_\nu)$ i.e. the last $\kdiag_\nu$ computed at Line 21 of Algorithm~\ref{algoAffQP}. The maximum over all numbers of iterations is also provided (Mx. It. Nb.). The two last columns of Tables~\ref{Lineartable} and~\ref{Affinetable} concern the difference between the number of iterations and the rank $\bigkse_\nu$ i.e. the smallest maximizer rank. One column gives the average over those differences (Avg. It.-Opt.) and the other the maximum of all differences (Mx. It.-Opt.). 

The results of the benchmarks for the linear systems are presented at Table~\ref{Lineartable} whereas the ones for affine systems are presented at Table~\ref{Affinetable}. 

\renewcommand{\arraystretch}{1.3}
\begin{table}
\centering
\begin{tabular}{|*{12}{c|}}
\hline
\hline
Obj. & Dim. & Ver. & Status & Avg. & Avg. & Avg. & Mx. & Avg. & Mx. & Avg. & Mx.\\
Type &      & Nb. & C/K/F & Time  & Mem. & $\ks_\nu$ & $\ks_\nu$& It. Nb. & It. Nb. & It.-Opt. & It.-Opt.\\
\hline
\hline
CXH & 2 & 100 & 0/100/0 & 0.0004 & 0.185 & 0 & 0 & 4 & 19 & 3 & 19 \\
CXH & 2 & 1000 & 0/100/0 & 0.0029 & 1.761 & 0 & 0 & 4 & 16 & 4 & 16 \\
CXH & 2 & 100000 & 0/100/0 & 0.3153 & 197.324 & 0 & 0 & 5 & 23 & 4 & 16 \\
CXH & 5 & 100 & 0/100/0 & 0.0017 & 1.074 & 0 & 0 & 25 & 124 & 23 & 124 \\
CXH & 5 & 1000 & 0/100/0 & 0.0189 & 10.304 & 0 & 0 & 26 & 67 & 24 & 67 \\
CXH & 5 & 100000 & 0/100/0 & 1.5401 & 1030.766 & 0 & 0 & 26 & 90 & 24 & 89 \\
CXH & 10 & 100 & 0/100/0 & 0.0054 & 3.023 & 0 & 0 & 60 & 143 & 58 & 143 \\
CXH & 10 & 1000 & 0/100/0 & 0.0445 & 27.241 & 0 & 0 & 59 & 125 & 57 & 124 \\
CXH & 10 & 100000 & 0/100/0 & 4.0076 & 2894.589 & 0 & 0 & 64 & 188 & 62 & 188 \\
CXH & 20 & 100 & 0/100/0 & 0.0126 & 8.836 & 0 & 0 & 117 & 270 & 113 & 262 \\
CXH & 20 & 1000 & 0/100/0 & 0.1112 & 75.21 & 0 & 0 & 119 & 231 & 116 & 231 \\
CXH & 20 & 100000 & 0/100/0 & 9.5337 & 7695.709 & 0 & 0 & 125 & 262 & 121 & 262 \\
\hline
CXnH & 2 & 100 & 0/100/0 & 0.0004 & 0.201 & 0 & 0 & 4 & 16 & 4 & 16 \\
CXnH & 2 & 1000 & 0/100/0 & 0.003 & 1.916 & 0 & 0 & 5 & 17 & 4 & 14 \\
CXnH & 2 & 100000 & 0/100/0 & 0.2511 & 170.894 & 0 & 0 & 4 & 22 & 3 & 19 \\
CXnH & 5 & 100 & 0/100/0 & 0.0022 & 1.188 & 0 & 0 & 28 & 90 & 26 & 90 \\
CXnH & 5 & 1000 & 0/100/0 & 0.0182 & 11.341 & 0 & 0 & 28 & 127 & 27 & 127 \\
CXnH & 5 & 100000 & 0/100/0 & 1.5231 & 1093.95 & 0 & 0 & 28 & 97 & 26 & 97 \\
CXnH & 10 & 100 & 0/100/0 & 0.0053 & 3.144 & 0 & 0 & 62 & 164 & 60 & 164 \\
CXnH & 10 & 1000 & 0/100/0 & 0.0465 & 28.618 & 0 & 0 & 63 & 182 & 60 & 182 \\
CXnH & 10 & 100000 & 0/100/0 & 4.1589 & 2973.149 & 0 & 0 & 66 & 171 & 63 & 171 \\
CXnH & 20 & 100 & 0/100/0 & 0.0127 & 8.597 & 0 & 0 & 114 & 225 & 110 & 225 \\
CXnH & 20 & 1000 & 0/100/0 & 0.114 & 77.509 & 0 & 0 & 123 & 247 & 119 & 247 \\
CXnH & 20 & 100000 & 0/100/0 & 9.1668 & 7374.275 & 0 & 0 & 120 & 281 & 116 & 274 \\
\hline
CAnH & 2 & 4 & 0/95/5 & 0.2105 & 6.434 & 11 & 94 & 51 & 245 & 35 & 224 \\
CAnH & 5 & 32 & 0/90/10 & 0.7695 & 25.422 & 21 & 99 & 158 & 585 & 127 & 541 \\
CAnH & 7 & 128 & 0/85/15 & 1.0993 & 38.908 & 20 & 99 & 220 & 425 & 189 & 320 \\
CAnH & 10 & 1024 & 0/84/16 & 1.4928 & 68.479 & 23 & 98 & 290 & 499 & 259 & 488 \\
CAnH & 15 & 32768 & 0/86/14 & 2.3925 & 177.329 & 25 & 100 & 401 & 636 & 367 & 600 \\
CAnH & 20 & 1048576 & 0/89/11 & 4.9895 & 1512.594 & 25 & 90 & 501 & 713 & 468 & 658 \\
\hline
\hline
\end{tabular}
\caption{Experiments Table for Linear Systems}
\label{Lineartable}
\end{table}
\subsubsection{Discussions about the obtained results}
\paragraph{About linear systems.}
In Table~\ref{Lineartable}, the method succeeds to solve Problem~\eqref{optpb} when the objective function is convex. Unsurprisingly, in the convex case, the memory and the time consumption of the method grow exponentially when the number of vertices increases. This is due to the use of Lemma~\ref{lemma2}. To pass from homogeneous to non-homogeneous objective quadratic function does not impact the resolution time and the quantity of memory used for the resolution. 

In Table~\ref{Lineartable}, still for the convex case, we see that the first positive term of the sequence is still achieved at the rank $k=0$. This means that our Prop.~\ref{naiveresult} could be extended to a more general setting. For a fixed dimension of the system, the number of iterations is constant with respect to the number of vertices. However, the number of iterations increases with the dimension of the system. The dependency between our formula provided at Eq.~\eqref{kdiagform} and the dimension of the system will be studied in future works. As the integer $\bigkse_\nu$ is very close to 0 for the convex case, the columns "Avg. It. Nb" and "Avg. It.-Opt." are quite similar.   

In Table~\ref{Lineartable}, when the objective function is concave, the case where the objective function is homogeneous is not presented. Indeed, the values $\nu_k$ are still negative since $Q$ is negative definite and from Prop.~\ref{posetdelta} and Prop.~\ref{argmax}, the optimal value is equal to 0 and never achieves. When the objective function is non-homogeneous, the method succeeds in 88\% of cases. This is explained by the fact that the linear part has to compensate the negativity of terms $x^\intercal {A^k}^\intercal Q A^k x$. This is highlighted by the column "Mx. $\ks_\nu$" where the integers $\ks_\nu$ are really close to our parameter $N$ i.e. the maximal number of iterations for the search of a positive term. About the resolution time and the memory used, this scales better than in the convex case. The numbers of iterations are significantly  bigger than in the convex case and so the number of convex quadratic programs to solve blows up. This good scalability comes from the theory (the use of interior-points methods) and the practice (the use of a large-scale solver).

\begin{table}
\centering
\begin{tabular}{|*{12}{c|}}
\hline\hline
Obj. & Dim. & Ver. & Status & Avg. & Avg. & Avg. & Mx. & Avg. & Mx. & Avg. & Mx.\\
Type &      & Nb. & C/K/F & Time  & Mem. & $\ks_\nu$ & $\ks_\nu$& It. Nb. & It. Nb. & It.-Opt. & It.-Opt.\\
\hline
\hline
CXH & 2 & 100 & 0/77/23 & 0.0022 & 1.126 & 2 & 60 & 17 & 154 & 13 & 84 \\
CXH & 2 & 1000 & 0/66/24 & 0.0227 & 14.132 & 5 & 92 & 22 & 192 & 15 & 94 \\
CXH & 2 & 100000 & 0/69/31 & 1.9327 & 1174.1 & 2 & 60 & 14 & 123 & 10 & 66 \\
CXH & 5 & 100 & 0/76/24 & 0.0043 & 2.548 & 5 & 79 & 56 & 202 & 47 & 121 \\
CXH & 5 & 1000 & 0/72/28 & 0.0363 & 23.548 & 5 & 68 & 49 & 228 & 40 & 142 \\
CXH & 5 & 100000 & 0/78/22 & 3.4679 & 2098.026 & 4 & 41 & 47 & 211 & 41 & 162 \\
CXH & 10 & 100 & 0/82/18 & 0.0091 & 4.609 & 6 & 89 & 93 & 352 & 83 & 243 \\
CXH & 10 & 1000 & 0/81/19 & 0.0908 & 43.871 & 6 & 57 & 97 & 246 & 87 & 198 \\
CXH & 10 & 100000 & 0/78/22 & 7.5454 & 4298.45 & 5 & 55 & 104 & 277 & 95 & 257 \\
CXH & 20 & 100 & 0/73/27 & 0.0195 & 11.173 & 8 & 91 & 170 & 564 & 157 & 482 \\
CXH & 20 & 1000 & 0/78/22 & 0.1587 & 90.585 & 10 & 79 & 170 & 364 & 154 & 341 \\
CXH & 20 & 100000 & 0/82/18 & 14.8207 & 8758.131 & 8 & 88 & 154 & 397 & 141 & 292 \\
\hline
CXnH & 2 & 100 & 0/75/25 & 0.0026 & 1.054 & 1 & 6 & 11 & 107 & 9 & 106 \\
CXnH & 2 & 1000 & 0/71/29 & 0.0246 & 11.746 & 2 & 28 & 14 & 82 & 11 & 54 \\
CXnH & 2 & 100000 & 0/79/21 & 1.6282 & 979.075 & 2 & 34 & 16 & 114 & 12 & 69 \\
CXnH & 5 & 100 & 0/68/32 & 0.0059 & 2.686 & 6 & 96 & 57 & 256 & 48 & 146 \\
CXnH & 5 & 1000 & 0/68/32 & 0.0463 & 23.483 & 4 & 56 & 50 & 223 & 42 & 182 \\
CXnH & 5 & 100000 & 0/75/25 & 3.5164 & 2125.437 & 3 & 52 & 45 & 200 & 38 & 145 \\
CXnH & 10 & 100 & 0/64/36 & 0.0102 & 4.727 & 8 & 54 & 98 & 247 & 84 & 224 \\
CXnH & 10 & 1000 & 0/74/26 & 0.0784 & 40.883 & 6 & 81 & 91 & 248 & 81 & 208 \\
CXnH & 10 & 100000 & 0/78/22 & 7.0975 & 4270.664 & 8 & 86 & 103 & 267 & 90 & 207 \\
CXnH & 20 & 100 & 0/74/26 & 0.0184 & 10.692 & 8 & 100 & 162 & 531 & 149 & 415 \\
CXnH & 20 & 1000 & 0/81/19 & 0.1707 & 99.201 & 7 & 74 & 172 & 381 & 159 & 361 \\
CXnH & 20 & 100000 & 0/75/25 & 14.6931 & 8848.795 & 9 & 85 & 166 & 397 & 151 & 331 \\
\hline
CAH & 2 & 4 & 0/100/0 & 0.141 & 3.251 & 1 & 53 & 25 & 106 & 23 & 106 \\
CAH & 5 & 32 & 0/100/0 & 0.3965 & 7.687 & 1 & 9 & 45 & 128 & 44 & 126 \\
CAH & 7 & 128 & 0/100/0 & 0.5147 & 11.554 & 0 & 0 & 56 & 155 & 56 & 155 \\
CAH & 10 & 1024 & 0/100/0 & 0.6534 & 20.191 & 0 & 0 & 71 & 177 & 71 & 177 \\
CAH & 15 & 32768 & 0/100/0 & 1.0462 & 75.806 & 0 & 0 & 90 & 213 & 90 & 213 \\
CAH & 20 & 1048576 & 0/100/0 & 3.552 & 1466.55 & 0 & 0 & 110 & 284 & 110 & 284 \\
\hline
CAnH & 2 & 4 & 0/100/0 & 0.1354 & 3.092 & 1 & 12 & 24 & 93 & 22 & 57 \\
CAnH & 5 & 32 & 0/100/0 & 0.3423 & 7.59 & 1 & 5 & 45 & 124 & 44 & 124 \\
CAnH & 7 & 128 & 0/100/0 & 0.5054 & 10.633 & 0 & 0 & 52 & 150 & 51 & 150 \\
CAnH & 10 & 1024 & 0/100/0 & 0.6846 & 19.402 & 0 & 0 & 68 & 191 & 68 & 189 \\
CAnH & 15 & 32768 & 0/100/0 & 1.0987 & 77.652 & 0 & 0 & 95 & 220 & 95 & 220 \\
CAnH & 20 & 1048576 & 0/100/0 & 3.4749 & 1465.129 & 0 & 0 & 108 & 233 & 108 & 233 \\
\hline\hline
\end{tabular}
\caption{Experiments Table for Affine Systems}
\label{Affinetable}
\end{table}
\paragraph{About affine systems.}
The resolution time and the memory used to solve one instance are a little bit higher in Table~\ref{Affinetable} for the convex case compared with the ones for linear systems of Table~\ref{Lineartable}. This can be simply explained by the increase of the number of iterations. The reformulation proposed at Subsection~\ref{affine} of  Problem~\eqref{optpb} for a pure affine system keeps the same structure (the same dimension and the same number of extreme points/or constraints). The only change is about the homogeneity of the objective function. Homogeneity only appears in very particular situation where $2\btilde^\intercal Q+q^\intercal =0$. Thus, even for objective functions which are homogeneous initially, the obtained reformulation has, in general, a non-homogeneous objective function.   
Furthermore, we see at Table~\ref{Affinetable} that, for the convex case, the method succeeds to solve Problem~\eqref{optpb} is 74.75\% of cases. Recall that a failure happens when a positive term has not been found before $N$ steps. At Equation~\eqref{affineequation}, since $y_0$ lies in $\xin-\btilde$ we have, for $k=0$, $(2\btilde Q+q^\intercal) A^k y_0=(2\btilde Q+q^\intercal ) x_0-2\btilde Q \btilde +q^\intercal \btilde$. We, thus, add a negative term $-2\btilde Q \btilde$ as $Q$ is positive semi-definite. In contrary, when $Q$ is negative definite this term is positive and might help the objective function to be positive. 

We remark that in both linear and affine cases, the status "Corollary 1" never happens. It should be interesting to investigate if this inequality can hold or not. 




\section{Conclusion And Future Works}
\label{conclusion}
In this paper, we develop a method to solve the problem of maximizing a convex or concave quadratic function over the reachable values set of a convergent affine discrete-time system. This is the same as searching the term of the sequence defined by the system for which the objective is maximal. The method proposed in the paper consists in finding the smallest possible rank of the sequence for which the search of the maximum is useless. Actually, we construct a family of integers which are parameterized by the positive terms of the sequence. Then if a new positive term is given, then we can recompute a new rank. This idea is used inside the algorithm to reduce the number of iterations. 

The results of the prototype implemented in Julia on a personal laptop are promising. Nevertheless, some computational aspects might be improved. The computation of the maximum of a convex quadratic over a polytope could use more scalable methods~\cite{konno1976maximization,Floudas1995,Tuy2016}. Moreover, the indefinite case might be considered in the same time. 


In this paper, we have only considered diagonalizable matrix dynamics. The non-diagonalizable case has to be studied. A direct decomposition of the matrix dynamics such as Dunford decomposition does not permit to construct an uniform integer $\kdiag_\nu$ as it is done here. The use of Lyapunov functions~\cite{adj2018optimal} must generalize the approach even if it seems to be less precise in practice. 

Finally, the future directions of research must include more general dynamics and more general objective functions. We may think about piecewise affine or more generally piecewise polynomial systems and polynomial objective functions.

\section{Acknowledgements}
The author would like to Milan Korda for his suggestions about the paper.
\bibliographystyle{plain}
\bibliography{paperbib-final}
\end{document}